\def\imod#1{\allowbreak\mkern10mu({\operator@font mod}\,\,#1)}
\newtheorem{theorem}{Theorem}[section]
\newtheorem{lemma}[theorem]{Lemma}
\newtheorem{claim}[theorem]{Claim}
\newtheorem{corollary}[theorem]{Corollary}
\theoremstyle{definition}
\newtheorem{definition}[theorem]{Definition}
\newtheorem{fact}[theorem]{Fact}
\newtheorem{question}[theorem]{Question}
\theoremstyle{remark}
\newtheorem{remark}[theorem]{Remark}
\theoremstyle{remark}
\newtheorem{convention}[theorem]{Convention}
\numberwithin{equation}{section}
    \DeclareMathOperator{\dom}{dom}
    \DeclareMathOperator{\ran}{ran}
    \DeclareMathOperator{\fix}{fix}
    \DeclareMathOperator{\cofin}{cofin}
    \DeclareMathOperator{\eL}{\mathbf{L}}
    \DeclareMathOperator{\Vee}{\mathbf{V}}
    \newcommand{\forces}{\Vdash}
\def\C{{\mathbb C}}
\def\N{{\mathbb N}}
\def\bF{{\mathbb F}}
\def\P{{\mathbb P}}
\def\Q{{\mathbb Q}}
\DeclareMathOperator{\one}{id_\N}
\DeclareMathOperator{\mpath}{path}
\DeclareMathOperator{\set}{use}
\DeclareMathOperator{\parfun}{par}
\DeclareMathOperator{\parfin}{fin}
\title{A co-analytic Cohen-indestructible maximal cofinitary group}
\author{Vera Fischer}
\address{Vera Fischer, Kurt G\"odel Research Center, Universit\"at Wien, W\"ahringerstra\ss e 25, 1080 Wien, Austria}
\email{vera.fischer@univie.ac.at}
\author{David Schrittesser}
\address{David Schrittesser, Department of Mathematical Sciences, University of Copenhagen, Universitetsparken 5, 2100 Copenhagen, Denmark}
\email{david.s@math.ku.dk}
\author{Asger T\"ornquist}
\address{Asger T\"ornquist, Department of Mathematical Sciences, University of Copenhagen, Universitetsparken 5, 2100 Copenhagen, Denmark}
\email{asgert@math.ku.dk}
\subjclass[2010]{03E15, 03E35}
\keywords{Descriptive set theory, consistency and independence results, maximal cofinitary groups, constructible sets, Cohen forcing, co-analytic}
\begin{document}

\maketitle

\begin{abstract}
Assuming that every set is constructible, we find a $\Pi^1_1$ maximal cofinitary group of permutations of $\N$ which is indestructible by Cohen forcing.
Thus we show that the existence of such groups is consistent with arbitrarily large continuum.
Our method also gives a new proof, inspired by the forcing method, of Kastermans' result that there exists a $\Pi^1_1$ maximal cofinitary group in $\eL$.
\end{abstract}

\section{Introduction}\label{intro}

{\bf (A)} We denote  the group of permutations (bijections) of $\N$  by $S_\infty$, and its unit element by $\one$.
An element of $S_\infty$ is cofinitary if and only if it has only finitely many fixed points, 
and  $\mathcal{G}$  is called a \emph{cofinitary group} precisely if $\mathcal{G} \leq S_\infty$ and all elements of $\mathcal G\setminus\{ \one\}$ are cofinitary.

A cofinitary group is said to be \emph{maximal} if and only if it is maximal under inclusion among cofinitary groups.

\medskip

Various aspects of maximal cofinitary groups (or short, \emph{mcg}s) have long been studied (see e.g.\ \cite{cameron-cofinitary,cameron-aspects,adeleke-embeddings,truss-embeddings,truss-joint,koppelberg-groups,hjorth-cameron}), including
 possible sizes of mcgs; their relation to maximal almost disjoint (or \emph{mad}) families, of which they are examples;
as well as inequalities relating $\mathfrak{a}_g$, i.e.\ the least size of a mcg, to other cardinal invariants of the continuum; see e.g.\ \cite{zhang-maximal,zhang-permutation,hrusak-cofinitary,brendle-uniformity,kastermans-cardinal,fischer-toernquist-template}.
Analogous questions about permutation groups on $\kappa$, where $\kappa$ is an uncountable cardinal, have also been studied; see e.g.\ \cite{fischer-maximal}.
The isomorphism types of mcgs have been investigated in \cite{kastermans-isomorphism}.

The line of research to which this paper belongs concerns the \emph{definability} of mcgs. 

\medskip 

{\bf (B)} Since the existence of mcgs relies on the axiom of choice, the question of whether a mcg can be \emph{definable} has drawn considerable interest.

It was shown by Truss \cite{truss-embeddings} and Adeleke \cite{adeleke-embeddings} that no mcg can be countable; this was improved by Kastermans'  
result \cite[Theorem 10]{kastermans-complexity} that no mcg can be $K_\sigma$.
On the other hand,  Gao and Zhang \cite{gao-definable} showed that assuming $\Vee=\eL$, there is a mcg with a co-analytic generating set.
This, too, was improved by Kastermans with the following theorem.

\begin{theorem}[\cite{kastermans-complexity}]\label{t.kastermans}
If $\Vee=\eL$ there is a $\Pi^1_1$ (i.e.\ effectively co-analytic) mcg.
\end{theorem}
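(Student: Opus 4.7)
The plan is to build $\mathcal G$ by transfinite recursion of length $\omega_1$ under $\Vee=\eL$, producing a generating sequence $(g_\alpha)_{\alpha<\omega_1}$. Enumerate all reals---equivalently, all candidate permutations $\pi\in S_\infty$---as $(\pi_\alpha)_{\alpha<\omega_1}$ in the $<_\eL$-order, and set $G_\alpha=\langle g_\beta : \beta<\alpha\rangle$ at stage $\alpha$. Two tasks are discharged at each stage: extending $G_\alpha$ by a new cofinitary generator $g_\alpha$, and handling $\pi_\alpha$ for the sake of maximality, either by placing $\pi_\alpha$ into the eventual $\mathcal G$ as a word in $\{g_\beta:\beta\leq\alpha\}$ (possible when $\langle G_\alpha,\pi_\alpha\rangle$ is still cofinitary) or else by producing a non-trivial word $w$ in $\{\pi_\alpha\}\cup\{g_\beta:\beta\leq\alpha\}$ with infinitely many fixed points, thereby blocking $\pi_\alpha$ from any cofinitary extension of $\mathcal G$.

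The combinatorial core is a forcing-style extension lemma. Given a countable cofinitary $G$, let $\mathbb P_G$ be the forcing whose conditions are finite partial injections $s\colon\N\partialto\N$, ordered by reverse inclusion. The classical preservation lemma for Cohen-style forcings on cofinitary groups states that a $\mathbb P_G$-generic $g\in\N^\N$ produces a cofinitary extension $\langle G,g\rangle$: for each non-trivial reduced word $w(\bar y,x)\in G*\langle x\rangle$ and each $k\in\N$, there is a dense $D_{w,k}\subseteq\mathbb P_G$ of conditions precluding $w$-fixed points beyond the threshold $k$, and the countable family of all such density requirements is met by a single $g\in\N^\N$. The maximality diagonalization against $\pi_\alpha$ is encoded as further density requirements: one branch forces some word in $g_\alpha$ and $G_\alpha$ to represent $\pi_\alpha$, the other branch forces a word involving both $\pi_\alpha$ and $g_\alpha$ to have arbitrarily many fixed points. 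Since only countably many density conditions arise at stage $\alpha$, all are met by a single choice of $g_\alpha$.

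A naive definition of the generating set $\{g_\alpha : \alpha<\omega_1\}$ is $\Sigma^1_2$: a real $g$ qualifies if there exists a countable transitive $M\models\mathrm{ZFC}^-+\Vee=\eL$ in which $g$ is the output at some stage. To sharpen this to $\Pi^1_1$, one chooses $g_\alpha$ to be the $<_\eL$-least real meeting the listed requirements, and arranges that the stage-$\alpha$ recipe---enumeration of the relevant words, of the density sets, and of the branch of the diagonalization---depends only on parameters lying in $\eL_{\gamma_\alpha}$ for some $\gamma_\alpha<\omega_1$, and is absolute for every countable transitive $M\models\mathrm{ZFC}^-+\Vee=\eL$ containing $g_\alpha$ and $\eL_{\gamma_\alpha}$. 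A standard Miller-style coding (also underlying analogous $\Pi^1_1$ mad family constructions) then converts the $\Sigma^1_2$ definition into a $\Pi^1_1$ one by replacing ``there exists such an $M$'' with ``every such $M$ agrees.'' Closure under words keeps $\mathcal G$ itself $\Pi^1_1$, since $h\in\mathcal G$ iff there is a finite word in the generators representing $h$.

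The main obstacle is the absoluteness step: the entire stage-$\alpha$ recipe must be computable uniformly inside any sufficiently closed countable transitive $M\models\mathrm{ZFC}^-+\Vee=\eL$ containing the appropriate parameters. This is where the forcing-inspired perspective pays off, since the statement ``$g$ meets a specified countable collection of dense subsets of $\mathbb P_G$'' is inherently robust under passage to countable transitive models---density of each requirement is a $\Pi^0_1$ property of finite data---whereas a direct combinatorial bookkeeping (as in Kastermans' original proof) is more rigid. Once the recipe is set up uniformly, the $\Pi^1_1$ upper bound follows by the standard complexity calculus for universal quantifiers over codes of countable transitive models.
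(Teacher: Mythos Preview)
Your outline has the right overall shape, but there is a genuine gap at the complexity step. You write: ``Closure under words keeps $\mathcal G$ itself $\Pi^1_1$, since $h\in\mathcal G$ iff there is a finite word in the generators representing $h$.'' That sentence hides an \emph{existential} second-order quantifier over the generators appearing in the word: even if the generating set $\{g_\alpha:\alpha<\omega_1\}$ is $\Pi^1_1$, the formula $(\exists g_1,\hdots,g_k\text{ generators})\; h=w(g_1,\hdots,g_k)$ is only $\Sigma^1_2$. So at best your argument reproves the Gao--Zhang result (a mcg with co-analytic generating set), not Kastermans' theorem. Your invocation of ``Miller-style coding'' is exactly where the missing idea should go, but you have not supplied any mechanism by which an \emph{arbitrary word} in the generators encodes its own stage of construction; choosing each $g_\alpha$ to be $<_\eL$-least controls only the generators, not the products.

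This is precisely what the paper's forcing $\Q^z_{\mathcal G}$ provides. The Generic Coding Lemma guarantees that \emph{every} element of $\langle\mathcal G_\xi,\sigma_\xi\rangle\setminus\mathcal G_\xi$---not just the new generator $\sigma_\xi$---computes $z_\xi$, which in turn codes the theory of $\eL_{\delta(\xi)}$. Hence for any $\sigma\in\mathcal G\setminus\mathcal G_0$ one has $\delta(\xi)<\omega_1^\sigma$ for the stage $\xi$ at which $\sigma$ first appears, a code for the witnessing model lies in $\eL_{\omega_1^\sigma}$, and Mansfield--Solovay turns $(\exists y\in\eL_{\omega_1^\sigma})\,\Phi(y,\sigma)$ into a $\Pi^1_1$ condition that defines the whole group. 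A secondary issue: your poset $\mathbb P_G$ of bare finite partial injections has no dense sets ``precluding $w$-fixed points beyond threshold $k$'', since any condition can be extended to create new fixed points of $w$. One needs side conditions, as in Zhang's $\Q_{\mathcal G}$: a finite set $F^p$ of words, together with a restriction in the ordering that extensions may create new fixed points for $w\in F^p$ only along paths meeting a fixed finite set determined by $p$.
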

The previous theorem immediately raises the question of whether the existence of a $\Pi^1_1$ mcg is consistent with $\Vee\neq \eL$, or even with the negation of the continuum hypothesis.
In this paper we answer these questions in the positive:
\begin{theorem}\label{t.continuum}
The existence of a $\Pi^1_1$ mcg is consistent with arbitrarily large continuum (assuming the consistency of ZFC).
\end{theorem}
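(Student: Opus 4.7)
The strategy is to strengthen Theorem~\ref{t.kastermans} by producing, in $\eL$, a $\Pi^1_1$ mcg $\mathcal G$ that is \emph{Cohen-indestructible}: for every cardinal $\kappa$ and any $\eL$-generic filter $G$ for the Cohen forcing $\mathrm{Add}(\omega,\kappa)$, $\mathcal G$ is still a maximal cofinitary group in $\eL[G]$. Since $2^{\aleph_0}$ in $\eL[G]$ is $\kappa$, Theorem~\ref{t.continuum} follows. To make the transfer of definability work, one arranges the $\Pi^1_1$ definition $\varphi$ so that it internally refers to the $\eL$-construction (``$f$ appears at some stage of the recursion as carried out inside the $\eL$-minimal transitive model containing $f$''), ensuring that any $f\in \eL[G]$ satisfying $\varphi$ must lie in $\eL$, and hence in $\mathcal G$. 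Thus the same formula $\varphi$ defines the same set in every Cohen extension; ``being a group of cofinitary permutations'' is absolute; and maximality in the extension is exactly what Cohen-indestructibility guarantees.

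The construction proceeds by recursion of length $\omega_1$ along the $<_{\eL}$-order, enumerating two kinds of tasks via a suitable bookkeeping: (i) reals $f \in \eL$ which are candidates to be added to $\mathcal G$ or to be diagonalised out of it; (ii) nice Cohen names $\dot f$ for potential cofinitary permutations extending the group constructed so far. At a typical stage one has a finitely generated cofinitary group $\mathcal G_\alpha$ and a task. For (i), one either sets $\mathcal G_{\alpha+1}=\langle \mathcal G_\alpha, f\rangle$ when compatible, or adjoins a new generator $g$ such that some specified word in $\mathcal G_\alpha$, $g$, and $f$ has infinitely many fixed points --- thereby preventing $\langle \mathcal G, f\rangle$ from ever being cofinitary. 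For (ii), one adjoins $g$ in such a way that the Cohen forcing actively \emph{forces} some word in $\mathcal G_\alpha$, $g$, and $\dot f$ to have infinitely many fixed points, uniformly over the conditions in the Cohen poset.

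The $\Pi^1_1$-ness is then obtained by the standard Shoenfield-style coding used by Kastermans in Theorem~\ref{t.kastermans}: every step of the recursion has only countably many parameters and can be carried out inside a countable transitive model of a sufficient fragment of $\mathrm{ZFC}+\Vee=\eL$, so membership of $f$ in $\mathcal G$ is expressed as ``every countable transitive $\eL$-model containing $f$ agrees that $f$ has been added to the group by the recipe'', which is $\Pi^1_1$. It is precisely this style of definition which forces any $f\in \eL[G]$ satisfying $\varphi$ to lie in $\eL$, and hence to be an old element of $\mathcal G$.

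The main obstacle is the combinatorial lemma underlying task~(ii): given a finitely generated cofinitary group $H$ and a Cohen name $\dot f$ forced to extend $H$ to a cofinitary group, one must produce a single new generator $g$ so that (a) $\langle H, g\rangle$ is cofinitary in the ground model, and (b) some word in $g$, $H$ and $\dot f$ is forced to have infinitely many fixed points, against every Cohen condition. The delicate point is to build $g$ letter by letter while simultaneously controlling the infinitely many finite Cohen conditions that might determine initial segments of $\dot f$, and to do so using only information available inside a countable transitive $\eL$-model (so as not to spoil $\Pi^1_1$-ness). Once this diagonalisation lemma is established along the lines of the standard extension lemmas in the mcg literature, the bookkeeping closes up at stage $\omega_1$, $\mathcal G=\bigcup_{\alpha<\omega_1} \mathcal G_\alpha$ is the desired $\Pi^1_1$ mcg in $\eL$, and the absoluteness remarks of the first paragraph yield Theorem~\ref{t.continuum} after forcing with $\mathrm{Add}(\omega,\kappa)$ for any prescribed $\kappa$.
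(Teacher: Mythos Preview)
Your overall architecture---build in $\eL$ a Cohen-indestructible $\Pi^1_1$ mcg, then force with $\mathrm{Add}(\omega,\kappa)$---is correct and matches the paper's. There is, however, a genuine gap in your $\Pi^1_1$ step.

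The formula you propose, ``every countable transitive $\eL$-model containing $f$ agrees that $f$ was added'', is $\Pi^1_2$, not $\Pi^1_1$: the hypothesis ``$y$ codes a well-founded model'' is itself $\Pi^1_1$, so the implication is $\Sigma^1_1$ and the outer $\forall y$ pushes the whole formula to $\Pi^1_2$. The Miller-style route to $\Pi^1_1$ is the \emph{existential} form $(\exists y \in 2^\N \cap \eL_{\omega_1^\sigma})\,\Phi(y,\sigma)$, which is $\Pi^1_1$ by Spector--Gandy/Mansfield--Solovay; but for that formula to define $\mathcal G$, each $\sigma\in\mathcal G$ must itself compute a code for the level at which it entered, i.e.\ the relevant $\delta$ must satisfy $\delta<\omega_1^\sigma$. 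Your step~(i), ``set $\mathcal G_{\alpha+1}=\langle\mathcal G_\alpha,f\rangle$ when compatible'', destroys exactly this: an arbitrary compatible $f$ need not encode anything about its stage. The paper circumvents the problem by \emph{never} adjoining an arbitrary permutation; at each stage $\xi$ it adds a $\sigma_\xi$ which is generic over $\eL_{\delta(\xi)}$ for a forcing $\Q^{z_\xi}_{\mathcal G_\xi}$ designed so that every element of $\langle\mathcal G_\xi,\sigma_\xi\rangle\setminus\mathcal G_\xi$ Turing-computes $z_\xi$, a code for $\eL_{\delta(\xi)}$. Maximality (in $\eL$ and in Cohen extensions) then falls out of density arguments---the Hitting and $\P$-generic Hitting Lemmas---rather than from an explicit add-or-diagonalize dichotomy. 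A secondary point: you should also make explicit the ccc reduction from $\mathrm{Add}(\omega,\kappa)$-indestructibility to single-Cohen indestructibility, so that the bookkeeping over Cohen names is a set-sized task in $\eL$.
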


At the same time we give a new proof of Kastermans' Theorem \ref{t.kastermans}.
This is worthwhile for several reasons: Firstly, our method shows that in $\eL$, \emph{any} countable cofinitary group is contained in a co-analytic mcg. Secondly, the `coding technique' which ensures that the group is co-analytic, described in  Definition \ref{d.coding}, is much more straightforward than the one in \cite{kastermans-complexity}. Thirdly, this method seems open to a wider range of variation, allowing to construct mcgs with additional properties.

\medskip

An example of such a property is \emph{Cohen-indestructibility}, which we now define.
For this, first observe that if $\mathcal G$ is a cofinitary group, then clearly it remains so in any extension of the universe.
\begin{definition}\label{d.indestructible}
Let $\mathcal G$ be a mcg and let $\C$ denote Cohen forcing. We say
$\mathcal G$ is \emph{Cohen-indestructible} if and only if 
$\forces_{\C} \check{ \mathcal G}\text{ is maximal}$.
\end{definition}

A Cohen-indestructible mcg was first obtained by Zhang \cite{zhang-permutation}. 
The following is our main result; Theorem \ref{t.continuum} is clearly a corollary.
\begin{theorem}\label{t.indestructible}
If $\Vee=\eL$, there is a $\Pi^1_1$ Cohen-indestructible mcg.
\end{theorem}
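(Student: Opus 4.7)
The plan is to build a chain $\langle G_\alpha : \alpha < \omega_1^\eL\rangle$ of finitely generated cofinitary subgroups of $S_\infty$ inside $\eL$, each $G_\alpha$ generated by a set $\{h_\beta : \beta < \alpha\}$ of carefully chosen permutations, and to take $G := \bigcup_{\alpha < \omega_1} G_\alpha$ as the desired $\Pi^1_1$ Cohen-indestructible mcg. Using the canonical $\Sigma^1_2$ good well-ordering $<_\eL$ of reals, I would interleave a bookkeeping that enumerates, cofinally often, two kinds of ``threats'' to be neutralised at each stage $\alpha$:
\emph{type I threats}, namely permutations $g \in \eL$ of $\N$; and
\emph{type II threats}, namely pairs $(\dot g, p)$ where $\dot g$ is a nice $\C$-name for an element of $S_\infty$ and $p \in \C$ is a condition. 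At stage $\alpha$, I pick the $<_\eL$-least such threat not yet handled.

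The combinatorial heart of the recursion is a \emph{killing lemma}: given a countable cofinitary group $H$ and a threat, produce a permutation $h$ such that $\langle H, h\rangle$ is still cofinitary, \emph{and}
\begin{itemize}
\item in case of a type I threat $g$, either $g \in \langle H\rangle$ already or $\langle H, h, g\rangle$ is not cofinitary (so no extension of $G$ can include $g$);
\item in case of a type II threat $(\dot g, p)$, some extension $q \le p$ forces in $\C$ that $\langle \check H, \check h, \dot g\rangle$ is not cofinitary.
\end{itemize}
The standard partial-permutation approximation argument for building cofinitary extensions can be upgraded to handle (a): one chooses values of $h$ so that a specific word $w(h, g)$ in the target and $h$ acquires infinitely many forced fixed points. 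For (b), the argument becomes a density/fusion-style diagonalisation: since $\dot g$ is a name, the relevant values are undetermined by $p$, and one chooses both an extension $q$ and values of $h$ step by step, forcing $q$ to commit each step to making $w(h, \dot g)$ fix some new point.

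For the $\Pi^1_1$-ness of $G$, the plan is to exploit a coding device (analogous to, and simplified compared with, the one of Kastermans, to be formalised in the paper as Definition \ref{d.coding}): each $h_\beta$ is arranged so that its values on a thin set encode enough of the construction up to stage $\beta$ (including a code for the ordinal $\beta$ and for $G_\beta$) that one can ask, of any $g \in S_\infty$, whether it arises as a word in a ``locally valid'' initial segment of generators. By Shoenfield absoluteness and the fact that in $\eL$ every real lies in some countable transitive $\eL_\gamma$ with $\gamma < \omega_1$, the property ``$g$ is a word in generators of a genuine initial segment of the construction'' reduces to a $\Pi^1_1$ statement: for every countable $\omega$-model $M$ of a sufficient fragment of ZFC + $V = \eL$ containing $g$ and the code, $M$ believes $g$ lies in the group produced by the construction.

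Finally, Cohen-indestructibility follows from the type II steps: if $c$ is Cohen-generic over $\eL$ and $g \in \eL[c]$ is a permutation supposed to extend $G$, then $g$ has a nice name $\dot g \in \eL$, and the pair $(\dot g, p)$ was handled at some countable stage $\alpha$ for every $p \in \C$; the generator $h_\alpha$ then witnesses that $\langle G, g\rangle$ is not cofinitary in $\eL[c]$. I expect the main obstacle to be the type II case of the killing lemma: one has to simultaneously (i) extend the partial permutation defining $h$, (ii) extend the Cohen condition, and (iii) preserve the coding pattern demanded by Definition \ref{d.coding}, while still guaranteeing that arbitrary words over $\langle H, h\rangle$ remain cofinitary. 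Threading these requirements without losing control over any of them is the delicate combinatorial core of the argument.
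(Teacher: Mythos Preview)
Your overall plan---explicit bookkeeping of ground-model permutations and of Cohen names, hand-built generators $h_\alpha$ with coding woven in---is viable, but it is not the route the paper takes. The paper avoids bookkeeping altogether: at stage $\xi$ it chooses $\sigma_\xi$ to be \emph{generic} for the coding forcing $\Q^{z_\xi}_{\mathcal G_\xi}$ over a suitable countable level $\eL_{\delta(\xi)}$, where $z_\xi$ codes the theory of that level. All the work is then done by density lemmas for $\Q$: cofinitarity, hitting every ground-model permutation, and the coding are built into the forcing, and a single $\P$-generic Hitting Lemma (Lemma~\ref{l.cohen.generic.hitting}) applied with $\P=\C$ kills every Cohen name living in $\eL_{\delta(\xi)}$ at once. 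Your scheme is closer in spirit to Kastermans' original hand-construction; the paper's local-genericity method is more modular and cleanly separates the combinatorics of $\Q$ from the global recursion in $\eL$.

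There is, however, a genuine gap in your description of the type~II step. You write that one ``chooses both an extension $q$ and values of $h$ step by step, forcing $q$ to commit each step''; this is a fusion picture, and Cohen forcing has no fusion---an $\omega$-chain of extensions does not produce a condition, and no finite $q$ can decide infinitely many values of $\dot g$. What actually works is a pure density argument: diagonalise over all pairs $(q',N)$ with $q'\le p$, and for each such pair find a large fresh $n$ together with $q''\le q'$ deciding $\dot g(n)=m$ for some $m$ in the cofinite set of acceptable values for $h(n)$ (such $n,m$ exist because $\dot g$ is forced to be a bijection); then set $h(n)=m$. This arranges that below $p$ it is dense to force one more agreement, whence $p$ itself forces $\lvert\fix(h^{-1}\dot g)\rvert=\infty$. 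In the paper this is automatic, since $\sigma_\xi$ is generic over a model containing the name, and so meets every such dense subset of $\Q$; see the last paragraph of the proof of Theorem~\ref{t.main}. Finally, your $\Pi^1_1$ formulation ``for every countable $\omega$-model $M$ \dots\ $M$ believes $g$ lies in the group'' is not correct as stated: for $g\notin\mathcal G$ the decoded data may be garbage, leaving the universal quantifier vacuous. The paper's argument is existential with a bounded witness: $\sigma\in\mathcal G$ iff $(\exists y\in\eL_{\omega_1^\sigma})\,\Phi(y,\sigma)$ for a $\Pi^1_1$ well-foundedness formula $\Phi$, and the coding ensures $z_\xi\le_T\sigma_\xi$ so that such a $y$ really lies in $\eL_{\omega_1^\sigma}$; one then quotes that a $\Pi^1_1$ matrix with an existential bounded by $\eL_{\omega_1^\sigma}$ remains $\Pi^1_1$.
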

To prove the theorem, we first find a forcing which, given a cofinitary group $\mathcal G$ and $z\in2^\N$, adds a generic cofinitary group $\mathcal G'$ such that $\mathcal G\leq \mathcal G'$ 
and with the property that $z$ is computable from (or `is coded by', see Definition \ref{d.coding}) each element from a sufficiently large subset of $\mathcal G'$.
To find this forcing, we refine Zhang's forcing from \cite{zhang-maximal} (also see \cite{fischer-toernquist-template} and \cite{fischer-maximal} for variations). 

We then use this to give a new proof of Kastermans' Theorem \ref{t.kastermans}, building our group from permutations which are generic over certain countable initial segments of $\eL$.
We use ideas from \cite{fischer-maximal} to see that the group produced in this manner is Cohen-indestructible.

\medskip

{\bf (C)} The paper is structured as follows.
In \S \ref{s.notation}, we establish basic terminology for \S \ref{s.Q}. 
In \S \ref{ss.simple} we give a streamlined presentation of Zhang's forcing $\Q_{\mathcal G}$, in order to simplify the definition and discussion of our forcing $\Q^z_{\mathcal G}$, which follows in \S \ref{ss.Q}.
In \S \ref{s.main}, we prove our main result, Theorem \ref{t.indestructible}, in a slightly more general form
(Theorem \ref{t.main}), after a short review of facts of effective descriptive set theory and fine structure theory used in the proof.
We close in \S \ref{s.questions} by listing some questions which remain open.

\medskip

\emph{Acknowledgements}.
The first author would like to thank the
 Austrian Science Fund (FWF) for the generous support through Lise-Meitner
 research grant M1365-N13.
The second and third authors gratefully acknowledge the generous support from Sapere Aude grant no. 10-082689/FNU from Denmark's Natural Sciences Research Council, and from the DNRF Niels Bohr Professorship of Lars Hesselholt. 

\emph{Addendum}. After the appearance of this paper, Horowitz-Shelah \cite{borel-mcg} showed in ZF
that there is a Borel maximal cofinitary group.  Note that our result remains interesting, as the group we construct is still in $\eL$, of size $\omega_1$, and maximal after adding any number of Cohen reals, while the group from \cite{borel-mcg} always has size $2^\omega$.

\section{Notation and Preliminaries}\label{s.notation}

We start by reviewing the necessary definitions and introduce convenient terminology, in particular the notion of a \emph{path}.

\medskip 

{\bf(A)} Since we build a generic element of $S_\infty$ from finite approximations, we shall work with partial functions.
We write $\parfun(\N,\N)$ for the set of 
partial functions from $\N$ to $\N$, and $\parfin(\N,\N)$ for the set of finite such functions.
For $a \in \parfun(\N,\N)$, when we write $a(n)$ it is clearly implied that $n \in \dom(a)$
except when we say
\emph{a(n) is undefined}, which means that $n \notin \dom(a)$.
For the set of \emph{fixed points of $a$} we write
\[
\fix(a)=\{ n \in \N : a(n)=n \}.
\]

The set $\parfun(\N,\N)$ is naturally equipped with the operation of composition of partial functions
\[
(f g) (n) = m \iff f(g(n))=m,
\]
making it an associative monoid.

\medskip

Let $\mathcal{G}$ be an arbitrary group.
By $\bF(X)$ we denote the \emph{free group with single generator $X$}.
We identify the group $\mathcal G * \bF(X)$, i.e.\ the free product of $\mathcal G$ and $\bF(X)$, with the set $W_{\mathcal G,X}$ of reduced words from the alphabet $\big(\mathcal G\setminus\{ 1_{\mathcal G}\}\big)\cup \{ X, X^{-1}\}$, equipped with the familiar `concatenate and reduce' operation (see e.g.\ \cite[Normal Form Theorem]{lyndon-combinatorial}). The neutral element $1_{\mathcal G}$ is therefore identified with the empty word, which we denote by $\emptyset$ .

\medskip

By a \emph{circular shift} of a non-empty word $w=w_n\hdots w_1$ we mean the result of reducing the word
$w_{\sigma(n)} \hdots w_{\sigma(1)}$,
where $\sigma$ is a permutation of $\{ 1, \hdots, n\}$ such that for some  $k\in \N$, 
\[
\sigma(i) = i + k  \bmod n.
\]
Thus,  e.g.\ $cdab$ is a circular shift of $abcd$ (in the free group  generated by $\{a,b,c,d\}$).
By a subword of $w$ we mean a contiguous subword $w_i \hdots w_j$ for $n\geq i\geq j\geq1$, or the empty word.
Of course, the empty word is both the only circular shift and the only subword of itself.
\medskip

We call a group homomorphism $\rho \colon\mathcal G \to S_\infty$
a \emph{cofinitary representation of $\mathcal G$} if and only if $\rho[\mathcal G]$ is cofinitary.
Clearly, if $\rho$ is injective (i.e.\ a \emph{faithful representation}), we may identify $\mathcal G$ with the  cofinitary group $\rho[\mathcal G] \leq S_\infty$.

\medskip

For the remainder of this section assume $\mathcal G \leq S_\infty$.
Choosing an arbitrary $s \in \parfun(\N,\N)$ 
gives rise to a unique homomorphism of monoids
\[
\rho\colon\mathcal G * \bF(X) \rightarrow \parfun(\N,\N)
\]
such that $\rho(X)=s$ and $\rho$ is the identity on $\mathcal G$.
It can be defined by induction on the length of words in the obvious way.
Let's denote this homomorphism by $\rho_{\mathcal G, s}$, departing from \cite{fischer-toernquist-template} (where it is precisely the map $w \mapsto e_w(s)$).
Its image is the compositional closure  $\langle \mathcal G, s \rangle$ of $\mathcal G \cup \{ s\}$ in $\parfun(\N,\N)$.

\begin{convention}\label{convention}
We adopt the convention to denote $\rho_{\mathcal G, s}(w)$ by
$w[s]$, for any $w  \in W_{\mathcal G, X}$ (we `substitute $s$ for $X$ in $w$'; as e.g.\ in \cite{zhang-maximal,kastermans-complexity}).
\end{convention}
Observe that slightly awkwardly by this convention $\emptyset[s] = \one$ for any $s\in\parfun(\N,\N)$.

\medskip

{\bf(B)} We define the notion of a \emph{path}, which will be extremely useful in the next section. 
Fix $s \in \parfun(\N, \N)$.
Say  $w \in W_{\mathcal G, X}$, and in reduced form
\[
w= a_n  \hdots a_1.
\]

We  define the \emph{path under $(w,s)$ of $m$}, also called the \emph{$(w,s)$-path of $m$} and written
\[
\mpath(w,s,m) = \langle m_i \colon i \in \alpha \rangle,
\]
to be the following sequence of natural numbers:
$m_0=m$ and for $l\in\N$ and $i \in \N$ such that $0<i\leq n$, 
\begin{align*}
m_{n l + i}  =& a_i \hdots a_1 w^l[s] (m_0)%
\end{align*}
and $\alpha \in \omega+1$  is maximal such that all of these expressions are defined.
That is we simply iterate applying all the letters of $w$ as they appear from right to left and record the outcome until we reach an undefined expression.

We can represent such a path e.g.\ as follows (where $i = 1 +( k \bmod n)$):
\[
\hdots m_{k+1}\xleftarrow{\ a_i} m_{k}\hdots\stackrel{\ a_2}{\longleftarrow} m_{n+1} \stackrel{\ a_1}{\longleftarrow} m_n \stackrel{\ a_n}{\longleftarrow}  \hdots  \stackrel{\ a_2}{\longleftarrow} m_1 \stackrel{\ a_1}{\longleftarrow} m_0,
\]
or more simply, we shall represent it as $\langle \hdots, m_n, \hdots, m_1, m_0\rangle$.

For $k < \alpha$ and $i  =1 +( k \bmod n)$, we say $a_i$ \emph{occurs} at step $k+1$ in the above path;
if $m_{k+1}$ is defined we also say  $a_i$ \emph{is applied 
(to $m_{k}$)} at this step.
If $m_{k+1}$ is undefined, we say the path \emph{terminates after $\alpha-1=k$ steps with last value $m_k$} or that it \emph{terminates before (an occurrence of) the letter $a_i$}.

Sometimes we are interested in the path merely as a set, rather than as a sequence; so let
\[
\set(w,s,m)=\{ m_i \colon i < \alpha \}.
\]

\medskip

{\bf (C)} 
Of course, we identify $\N$ and $\omega$, but prefer to denote this set as $\N$ in the context of permutations.
We denote by $\lvert A \rvert$ the cardinality of $A$, for any set $A$.
We sometimes, but not always, decorate names in the forcing language with dots and checks, with the goal of aiding the reader. Notation regarding forcing is as in \cite{jech-set}.

\section{Coding into a generic group extension}\label{s.Q}

Fix, for this section, a cofinitary group $\mathcal G \leq S_\infty$. 
We want to enlarge it by $\sigma^* \in S_\infty$, such that $\langle \mathcal G, \sigma^*\rangle$ is cofinitary.
This can be done using a forcing invented by Zhang \cite{zhang-maximal}; for some of its applications see  \cite{brendle-uniformity,zhang-permutation,hrusak-cofinitary,zhang-constructing,kastermans-cardinal,gao-definable,kastermans-analytic}.
 
  \medskip
 
In \S \ref{ss.Q}, we introduce a new forcing $\Q^z_{\mathcal G}$,
such that in addition to the above, every permutation in $\langle \mathcal G,\sigma^*\rangle$ with a certain property `codes' a given, fixed $z \in2^\N$, in a certain sense (see below).

Before we introduce this new forcing notion,  we define our own version of Zhang's forcing,
$\Q_{\mathcal G}$ in \S \ref{ss.simple}, differing slightly from \cite{zhang-maximal}.
We then  analyse carefully how paths behave when conditions in $\Q_{\mathcal G}$ are extended,
facilitating the treatment of $\Q^z_{\mathcal G}$.

 \medskip

Note that in the case of countable $\mathcal G$, Zhang's $\Q_{\mathcal G}$ from \cite{zhang-maximal}, our version of  $\Q_{\mathcal G}$ described in \S \ref{ss.simple}, and the forcing $\Q^z_{\mathcal G}$ are all countable, i.e.\ particular presentations of Cohen forcing.

\subsection{Zhang's forcing revisited}\label{ss.simple}

We now turn to our definition of the forcing to add a generic faithful cofinitary representation of $\mathcal G*\bF(X)$.

\begin{definition}[The forcing $\Q_{\mathcal G}$]\label{d.Q.simple}~ 
\begin{enumerate}[label=(\alph*),ref=\alph*]
\item
Conditions of $\Q_{\mathcal G}$ are pairs $p=(s^p,F^p)$, where $s \in \parfin(\N,\N)$ is injective and $F^p \subseteq W_{\mathcal G, X}$ is finite and closed under taking subwords.
\item  \label{d.Q.simple.order}
$(s^q,F^q)\leq_{\Q_{\mathcal G}} (s^p,F^p)$ if and only if $s^q \supseteq s^p$, $F^q \supseteq F^p$ and for all $w\in F^p\setminus\mathcal G$,
if $m \in \fix(w[s^q])$ then there is a non-empty subword $w'$ of $w$ such that
letting $w=w_1w' w_0$ and letting $\langle \hdots m_1, m_0 \rangle$ be the $(w,s^q)$-path of $m$,
$m_k \in \fix(w'[s^p])$ where $k$ is the length of $w_0$; i.e., the path has the following form:
\[
m \xleftarrow{\ w_1} m_{k} \xleftarrow{\ w'} m_{k}\stackrel{\ w_0}{\longleftarrow} m
\]
\end{enumerate}
\end{definition}
The reader is invited to check that $\leq_{\Q_{\mathcal G}}$ is transitive---it is for this reason that we require $F^p$ to be closed under taking subwords.

We write any condition  $p \in \Q_{\mathcal G}$ as $(s^p, F^p)$ if we want to refer to the components of that condition.

If $G$ is $(\Vee,\Q_{\mathcal G})$-generic, we let
\[
\sigma_G = \bigcup_{p\in G} s^p.
\]
As we shall see,  $\rho_{\mathcal G,\sigma_G}\colon\mathcal G*\bF(X)\to \langle \mathcal G, \sigma_G \rangle$ is a faithful cofinitary representation; moreover, $\langle \mathcal G, \sigma_G \rangle$ is maximal with respect to the ground model, that is, for no $\tau\in (S_\infty\setminus\mathcal G) \cap \Vee$ is $\mathcal G\cup\{\sigma_G, \tau\}$ contained in a cofinitary group.

\medskip

The role of the requirement regarding fixed points in \eqref{d.Q.simple.order} is to guarantee that 
$\langle \mathcal G, \sigma_G\rangle$ is cofinitary (as will be seen Lemma \ref{l.fp}).

As is pointed out in \cite[p.~42f.]{zhang-maximal}, one cannot  replace  \eqref{d.Q.simple.order} by the simpler
\begin{enumerate}[label={$(\ref{d.Q.simple.order})'$},ref={$(\ref{d.Q.simple.order})'$}]
\item\label{d.Q.simple.order'}
$(s^q,F^q )\leq_{\Q_{\mathcal G}} (s^p,F^p)$ if and only if $s^q \supseteq s^p$, $F^q \supseteq F^p$ and for all $w\in F^p$, we have
$\fix(w[s^q])\subseteq  \fix(w[s^p])$.\label{order.alt}
\end{enumerate}
For with this simpler definition, supposing $g\in\mathcal G$ and $n\in\fix(g)$, the condition $(\emptyset, \{ X^{-1}gX\}) \in\Q_{\mathcal G}$ has no extension $q\in\Q_{\mathcal G}$ with $n\in \ran (s^q )$. 
It is fixed points of conjugate subwords that give rise to this problem. Accordingly, if desired one could replace the phrase `non-empty subword' in \eqref{d.Q.simple.order} by the phrase `\emph{conjugate} subword'.

In a previous article \cite{fischer-toernquist-template} by two of the present authors, allowing only certain words in $F^p$ made it possible to define $\leq_{\Q_{\mathcal G}}$ as in \ref{d.Q.simple.order'}. 
The present definition helps to simplify proofs in comparison with both \cite{fischer-toernquist-template} and Zhang's original version \cite{zhang-maximal}.

\medskip

We now prove two versions of the Domain Extension Lemma and a crucial lemma concerning the length of certain paths (Lemma \ref{l.path.control.simple}).
This will considerably clean up the presentation when we deal with the more complicated forcing 
$\Q^z_{\mathcal G}$.

\medskip

The following is implicit in \cite{zhang-maximal}; for the convenience of the reader, we include a new, very short proof.
\begin{lemma}[Contingent Domain Extension for $\Q_{\mathcal G}$]\label{l.domain.ext.cont}
Suppose $s\in \parfin(\N,\N)$ is injective, $w\in W_{\mathcal G, X}$ and $n\in\N$ is such that for any non-empty subword $w'$ of $w$, $n\notin \fix(w'[s])$. 
Then for a cofinite set of $n'$, letting $s' = s \cup \{ (n,n')\}$, we have that $s'$ is injective and $\fix(w[s'])=\fix(w[s])$.
\end{lemma}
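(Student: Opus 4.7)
The first inclusion $\fix(w[s]) \subseteq \fix(w[s'])$ will hold as soon as $s'$ is a partial function extending $s$ and $s'{}^{-1}$ extends $s^{-1}$: any evaluation $w[s](m) = m$ uses only values where the two partial functions (and their inverses) agree. Injectivity of $s'$ is arranged by demanding $n' \notin \ran(s) \cup \{n\}$, which excludes finitely many values of $n'$. The real work will lie in the reverse inclusion.

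Suppose some \emph{new} fixed point $m \in \fix(w[s']) \setminus \fix(w[s])$ exists for the chosen $n'$. The $(w,s')$-path $m_0 = m, m_1, \ldots, m_k = m$ must then make use of the new bridge $s'(n) = n'$ at least once; otherwise it would coincide with the $(w,s)$-path of $m$, making $m$ an old fixed point. Each such \emph{bridge step} is of one of two types: $a_i = X$ with $(m_{i-1}, m_i) = (n, n')$, or $a_i = X^{-1}$ with $(m_{i-1}, m_i) = (n', n)$; between consecutive bridges, the path is governed purely by $s$ and $\mathcal G$.

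I plan to classify new fixed points by their \emph{bridge pattern}, the finite data consisting of bridge positions $i_1 < \cdots < i_r$ in $w$ together with their types. For each fixed pattern, the path decomposes into segments $u_0, u_1, \ldots, u_r$, each a (possibly empty) subword of $w$ flanking the bridges, yielding a system of equations $u_l[s](x_l) = y_l$ with $x_l, y_l \in \{n, n'\}$ dictated by the pattern, plus endpoint equations linking $m$ to $\{n, n'\}$. A short case analysis will show each pattern admits only finitely many bad values of $n'$: an equation $u_l[s](n) = n$ with $u_l$ nonempty contradicts the hypothesis on $n$ and kills the pattern; an equation $u_l[s](n') = n'$ confines $n'$ to the finite set $\fix(u_l[s])$ (finite because $s$ has finite support so $u_l[s]$ has finite domain, or because $u_l$ reduces to a non-identity element of the cofinitary group $\mathcal G$); a ``crossed'' equation such as $u_l[s](n) = n'$ determines $n'$ uniquely from $n$, $s$, and $u_l$; and the degenerate case $u_l = \emptyset$ (two consecutive bridges) is ruled out by reducedness of $w$, which forbids $XX^{-1}$ or $X^{-1}X$, together with the exclusion $n' \neq n$. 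Taking the union over the finitely many bridge patterns then yields a finite bad set of $n'$.

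The hard part will be executing this case analysis cleanly and verifying that every middle equation falls into one of the four cases above; the hypothesis on $n$ is used essentially only to rule out the $u_l[s](n) = n$ case.
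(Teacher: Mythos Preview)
Your proposal is correct and follows essentially the same route as the paper. The paper pre-specifies a cofinite avoidance set for $n'$ (excluding fixed points of, and images/preimages of $n$ under, all subwords of \emph{cyclic permutations} of $w$) and then derives a contradiction from any new fixed point; your bridge-pattern analysis is the same case distinction in different clothing, since for $r\geq 2$ you examine the segment $u_1$ between the first two bridges exactly as the paper does, and for $r=1$ the combined endpoint equation $u_0 u_1[s](x)=y$ with $x,y\in\{n,n'\}$ is always crossed (a single bridge has one type), so it pins down at most one bad $n'$ --- note that $u_0 u_1$ is a subword of a cyclic permutation of $w$ rather than of $w$, which is why the paper's avoidance set is phrased in terms of cyclic permutations, but your ``crossed equation determines $n'$'' argument does not need this. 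One small caveat: your finiteness claim for $\fix(u_l[s])$ relies on $s$ having finite support, which the lemma does not state; the paper's proof tacitly uses the same assumption (it requires $n'\notin\ran(s)\cup\dom(s)$), and in the intended application $s=s^p$ is indeed finite.
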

\begin{proof}
Let $W^*$ be the set of subwords of circular shifts of $w$ and pick $n'$ arbitrary such that
\begin{equation}
\begin{split}\label{e.domain.ext.cont}
n' \notin  &\bigcup \big\{ \fix(w'[s]) \colon w' \in W^*\setminus\{ \emptyset\}\big\},\\
n' \notin &\bigcup  \big\{ w'[s]^i (n) \colon i \in \{-1,1\}, w' \in W^* \big\},\text{ and}\\
n' \notin &\ran(s).
\end{split}
\end{equation}
The role of the last requirement is to ensure that $s'$ is injective. 

Assume towards a contradiction that $m_0 \in \fix (w[s']) \setminus \fix(w[s])$.
As the $(w,s)$-path of $m_0$ differs from the $(w,s')$-path, the latter must contain an application of $X$ to $n$ or of $X^{-1}$ to $n'$. Write this latter path as
\begin{equation}\label{e.path}
 \hdots
m_{k(3)}\stackrel{\;\;w''}{\longleftarrow} m_{k(2)} 
\stackrel{\;\;X^{j}}{\longleftarrow} m_{k(1)} 
\stackrel{\;\;w'}{\longleftarrow} m_{k(0)}=m_0
\end{equation}
where $j\in\{-1,1\}$ and $m_{k(1)}=n$ when $j=1$, $m_{k(1)}=n'$ when $j=-1$; moreover we ask that $w',w''\in W_{\mathcal G, X}$ are the maximal subwords of $w$ such that 
from $m_{k(0)}$ to $m_{k(1)}$ and $m_{k(2)}$ to $m_{k(3)}$, the path contains no application of $X$ to $n$ or of $X^{-1}$ to $n'$ (allowing either of $w'$, $w''$ to be empty). 
Thus, $w'$ and $w''$ correspond to path segments where $s$ and $s'$ agree:
\begin{align*}
w'[s] (m_{k(0)})=w'[s'] (m_{k(0)}) = m_{k(1)},\\
w''[s] (m_{k(2)})=w''[s'] (m_{k(2)}) = m_{k(3)}.
\end{align*}

We show that we can assume $w=w''X^jw'$.
Otherwise, by maximality of $w''$, at step $k(3)$ again $X$ is applied to $n$ or $X^{-1}$ to $n'$. Write the path as 
\begin{equation*}
 \hdots
 \stackrel{\;\;\;X^{j'}}{\longleftarrow} m_{k(3)} 
\stackrel{\;\;w''}{\longleftarrow} m_{k(2)} 
\stackrel{\;\;X^{j}}{\longleftarrow} m_{k(1)} 
\stackrel{\;\;w'}{\longleftarrow} m_{k(0)}= m_0
\end{equation*}
with $j'\in\{-1,1\}$ and observe:
\begin{enumerate}[label=\arabic*.]
\item $m_{k(2)} = m_{k(3)}$; for otherwise, $n' = (w'')^i[s](n)$ for some $i\in\{-1,1\}$, contradicting the choice of $n'$.
\item Thus, $w'' \neq \emptyset$, since on one side of $w''$ we have $X$ and on the other $X^{-1}$ and  $w$ is in reduced form.
\item As $n' \notin \fix(w''[s])$, we have that $m_{k(2)}=m_{k(3)}=n$.
\item So $n \in \fix(w''[s])$, contradicting the hypothesis of the lemma.
\end{enumerate}
Thus, $w=w''X^jw'$ and $m_0 = m_{k(3)}$. 
We infer that $n'= (w' w'')^{-j}[s](n)$, again contradicting the choice of $n'$. 
\end{proof}
\begin{remark}
Note as this is easily overlooked, that \eqref{e.domain.ext.cont} implies $n'\neq n$, since $\emptyset\in W^*$ and by Convention \ref{convention}.
Also note that the requirements in \eqref{e.domain.ext.cont} were chosen to be easy-to-state rather than minimal (as will be the case for \eqref{e.domain.ext.simple}, \eqref{e.domain.ext.path}, \eqref{e.domain.ext.coding} and   \eqref{e.generic.hitting} below).
\end{remark}
This enables us to give a short proof of the analogue of \cite[Lemma~2.2]{zhang-maximal}:
\begin{lemma}[Domain Extension for $\Q_{\mathcal G}$]\label{l.domain.ext.simple}
For any $n\in \N$, the set of $q$ such that $n \in \dom(s^q)$, is dense in $\Q$. 
\end{lemma}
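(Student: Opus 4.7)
The plan is to apply the Contingent Domain Extension lemma (Lemma~\ref{l.domain.ext.cont}) simultaneously for each word in $F^p$, combined with a short case analysis. Fix $p=(s^p,F^p)\in\Q_{\mathcal G}$ and $n\in\N$; we may assume $n\notin\dom(s^p)$, otherwise $q:=p$ works. The strategy is to choose an appropriate $n'\in\N$ and set $q:=(s^p\cup\{(n,n')\},F^p)$.

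For each $w\in F^p$, the proof of Lemma~\ref{l.domain.ext.cont} isolates a finite ``bad set'' for $n'$, namely the finitely many values on the right-hand sides of the three conditions in \eqref{e.domain.ext.cont}. The union of these bad sets over the finite collection $F^p$ remains finite, so pick $n'$ outside it. Since \eqref{e.domain.ext.cont} explicitly excludes $\ran(s^p)\cup\dom(s^p)$, the function $s^q:=s^p\cup\{(n,n')\}$ is an injective partial function, so $q\in\Q_{\mathcal G}$.

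To verify $q\leq_{\Q_{\mathcal G}}p$, fix $w\in F^p$ and $m\in\fix(w[s^q])$. If $m\in\fix(w[s^p])$, then $w':=w$ (which is nonempty because $F^p\subseteq W_{\mathcal G,X}\setminus\mathcal G$) witnesses the order condition trivially. Otherwise $m$ is a new fixed point, and the $(w,s^q)$-path of $m$ must use the newly defined pair $(n,n')$ at some step; otherwise every step along the path would be defined under $s^p$ with the same value, making $m$ an old fixed point. In particular $n\in\set(w,s^q,m)$. Moreover, since $n'$ satisfies \eqref{e.domain.ext.cont}, the contrapositive of Lemma~\ref{l.domain.ext.cont} (read off from its proof) tells us that the hypothesis of that lemma must fail for $w$: there exists a nonempty subword $w'$ of $w$ with $n\in\fix(w'[s^p])$. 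Hence $n\in\set(w,s^q,m)\cap\fix(w'[s^p])$, as required.

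The main step requiring care is the contrapositive use of Lemma~\ref{l.domain.ext.cont}: once $n'$ satisfies \eqref{e.domain.ext.cont}, the existence of a new fixed point of $w[s^q]$ forces $n$ itself to lie in $\fix(w'[s^p])$ for some nonempty subword $w'$ of $w$. Everything else is direct from the definitions.
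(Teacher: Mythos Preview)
Your proof is correct and follows essentially the same approach as the paper's: the paper defines $F^*$ to be the set of all subwords of cyclic permutations of words in $F^p$ and writes one combined avoidance condition \eqref{e.domain.ext.simple}, whereas you take the union over $w\in F^p$ of the individual ``bad sets'' from \eqref{e.domain.ext.cont}; these amount to the same finite set of forbidden values for $n'$. Your verification of $q\leq_{\Q_{\mathcal G}}p$ via the contrapositive reading of the proof of Lemma~\ref{l.domain.ext.cont}---that once $n'$ satisfies \eqref{e.domain.ext.cont} any new fixed point forces $n\in\fix(w'[s^p])$ for some nonempty subword $w'$---is exactly what the paper does as well.
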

\begin{proof}
Fix $p\in \Q$; we shall find a stronger condition $q\in\Q$ such that $n \in \dom(s^q)$.
Analogously to the previous proof, 
let $F^*$ consist of the empty word together with all subwords of circular shifts of  words in $F^p$, and let $n'$ be arbitrary such that
\begin{equation}
\begin{aligned}
n' &\notin  \bigcup \big\{ \fix(w[s]) \colon w \in F^*\setminus\{  \emptyset\}\big\}, \\
n' &\notin  \bigcup \big\{ w[s]^i (n) \colon i \in \{-1,1\}, w \in F^*\big\}, \text{ and}\\
n' &\notin  \ran(s). \label{e.domain.ext.simple}
\end{aligned}
\end{equation}
Note that \eqref{e.domain.ext.simple} excludes only finitely many values for $n'$.
Define $s'=s \cup \{ (n,n')\}$ and $q = (s', F^p)$.
As in the proof of Lemma \ref{l.domain.ext.cont}, $s'$ is injective.

\medskip

Given $w\in F^p$ and supposing $m_0\in \fix(w[s'])\setminus \fix(w[s])$, the proof of the previous lemma shows that there is a subword $w'$ of $w$
such that $n\in \fix(w'[s])$ and $n$ appears in the $(w,s')$-path of $m_0$.
In other words, 
\[
\set(w,s',m)\cap \fix(w'[s])\neq \emptyset,
\]
whence $q \leq_{\Q_{\mathcal G}} p$.
\end{proof}

A crucial observation  for the following discussion of $\Q^z_{\mathcal G}$ is that when extending the domain of $s^p$ for a given condition $p$, we have fine control over the length of paths that result from this extension.

\begin{lemma}[Lengths of paths]\label{l.path.control.simple}
Fix $w\in W_{\mathcal G, X}$ and $m \in \N$.
Moreover, let $s\in\parfin(\N,\N)$ and $n\in \N\setminus\dom(s)$ be given.

Then for cofinitely many $n'\in \N$, if we let $s' = s\cup \{ (n,n')\}$ the following holds:
\begin{enumerate}[label=\arabic*.,ref=\arabic*]
\item \label{case.two} If the $(w, s)$-path  of $m$ does not terminate with last value $n$ before an occurrence of $X$, 
$\mpath(w,s',m)=\mpath(w,s,m)$.

\item \label{case.one} If $w$ is not conjugate to any of its proper subwords, $w \notin \mathcal G$, and the $(w, s)$-path  of $m$ terminates with last value $n$ before an occurrence of $X$, 
the $(w, s')$-path of $m$ contains exactly one more application of $X$ than does the $(w, s)$-path and no further application of $X^{-1}$.
\end{enumerate}
\end{lemma}
\begin{proof}
Let $E = \dom(s) \cup\ran(s)\cup\{n\}\cup\{m\}$ and $W^*$ be the set of subwords of circular shifts of $w$. 
Suppose $n'$ is arbitrary such that
\begin{equation}
\begin{split}\label{e.domain.ext.path}
n' \notin  &\bigcup \big\{ \fix(w'[s]) \colon w' \in W^*\setminus\{ \emptyset\}\big\} \text{ and}\\
n' \notin &\bigcup  \big\{ w'[s]^i [E] \colon i \in \{-1,1\}, w' \in W^* \big\}.\\
\end{split}
\end{equation}

In Case \ref{case.two} of the lemma, show $\mpath(w,s',m)=\mpath(w,s,m)$.
Suppose that the $(w, s)$-path  of $m$ terminates after $k$ steps with last value $m_k$ before an occurrence of $X^j$, $j\in\{-1,1\}$.
If $j=1$, $m_k\neq n$ by assumption; and as the $(w, s)$-path terminates with $m_k$, we have $m_k\notin\dom(s)\cup\{ n\}$, so the $(w, s')$-path terminates as well.

So assume towards a contradiction that $j=-1$ and $m_{k+1}$ in the $(w, s')$-path  of $m$ is defined.
As  the $(w, s)$-path terminates with $m_k$, before an occurrence of $X^{-1}$, while $(w,s')$-path does not terminate, $m_k=n'$.
Thus, $n' \in w'[s] [E]$ for a subword $w'$ of $w$, contradicting \eqref{e.domain.ext.path}.

\medskip

For Case \ref{case.one} of the lemma, suppose that
the $(w, s)$-path  of $m$ terminates after $k$ steps with last value $m_k=n$ before an occurrence of $X$.
In the $(w,s')$-path we have on the contrary that $m_{k+1}=n'$.

If the letter occurring at step $k+2$ in this path is again $X$,
the path terminates after $k+1$ steps with last value $n' = m_{k+1}$, since $n'\notin\dom(s')$ by \eqref{e.domain.ext.path}.

If the letter occurring at step $k+2$ is $g\in\mathcal G\setminus\{ \one\}$ followed by $X$ or $X^{-1}$ at step $k+3$, the path terminates after $k+2$ steps with last value $m_{k+2}= g(n')$, as otherwise, $n' \in g^{-1}[\dom(s')\cup\ran(s')]$, which implies
$n' \in g^{-1}[E]$
 or $n' \in \fix(g)$,
 contradicting \eqref{e.domain.ext.path}.
 
Lastly, suppose the letter occurring at step $k+2$ is $g\in\mathcal G\setminus\{ \one\}$ and $g$ is the last letter of $w$.
Further suppose  the following letter at step $k+3$ (the first letter in $w$) is $h \in \mathcal G\setminus\{\one,g^{-1}\}$  followed by $X$ or $X^{-1}$ at step $k+4$. 
Then the path terminates after $k+3$ steps with last value $m_{k+3}= h(g(n'))$, as otherwise, $n' \in (h g)^{-1}[\dom(s')\cup\ran(s')]$, which implies
$n' \in (hg)^{-1}[E]$
 or $n' \in \fix(hg)$,
 contradicting \eqref{e.domain.ext.path}.
 
As $w$ has no proper conjugate subwords and is reduced no other combination of letters can occur at the next steps.
\end{proof}

All other proofs regarding $\Q_{\mathcal G}$ will be omitted (but note that they can be inferred from their counterparts for $\Q^z_{\mathcal G}$ in the next section).

\subsection{Coding into a generic cofinitary group extension}\label{ss.Q}

Our next goal is to define, given $z \in 2^\N$, a forcing $\Q^z_{\mathcal G}$ such that whenever $G$ is $(\Vee, \Q^z_{\mathcal G})$-generic the following holds: There exists $\sigma_{G}\in S_\infty$ such that for a large enough set of 
$\sigma \in \langle \mathcal  G,\sigma_{G} \rangle$ (i.e., large enough for our application), $z$ is computable from $\sigma$ (i.e.\ the characteristic function of $z$ is computable by a Turing machine using $\sigma$ as an oracle).

\medskip

First, we describe the algorithm by which $z$ is computed from an element of $\langle \mathcal  G,\sigma_{G} \rangle\setminus\mathcal G$.
Since our forcing uses finite approximations to $\sigma_G$, we must also define the coding for elements of $\parfin(\N,\N)$.

\begin{definition}[Coding]\label{d.coding}~
\begin{enumerate}

\item
We say that $\sigma\colon\N\to\N$ \emph{codes} $z \in 2^\N$ \emph{with parameter} $m$ if and only if
\[
(\forall k\in \N) \; \sigma^{k+1}(m) \equiv z(k) \pmod{2}.
\] 

\item Let $t \in \{0,1\}^l$, where $l\in \N$.
We say that $\sigma \in \parfin(\N,\N)$  \emph{exactly codes} $t$
 \emph{with parameter} $m\in \N$ if and only if
\[
(\forall k< l) \; \sigma^{k+1}(m) \equiv t(k) \pmod{2}
\]
and in addition $\sigma^{l+1}(m)$ is undefined.
\end{enumerate}
\end{definition}
Note that using this algorithm when $\sigma \in S_\infty$ is 
conjugate to an element of $\mathcal G$ with only finite orbits, we can't expect that arbitrary $z\in 2^\N$ be coded---the sequence coded by such $\sigma$ will always be periodic.
Fortunately we can exclude such $\sigma$ from consideration for the proof of our main result.

In fact it suffices for our application to restrict our attention to those $\sigma$ which can be represented as a word in $W_{\mathcal G, \sigma_G}\setminus\mathcal G$ which is not conjugate to any of its proper subwords (we will see below that $\rho_{\mathcal G, \sigma_G}$ is injective so in fact this representation is unique).\footnote{Devising a forcing which similarly codes $z$ into all words except those conjugate to elements of $\mathcal G$ with only finite orbits is possible, but more involved for several reasons.}

\medskip

For the rest of this section, fix $z\in 2^\N$.
Now we can define the forcing. 
 \begin{definition}[Definition of $\Q=\Q^z_{\mathcal G}$]\label{d.Q}~
\begin{enumerate}[label=(\Alph*),ref=\Alph*]
\item\label{conditions.Q}
Conditions of $\Q$ are triples $p=(s^p,F^p, \bar m^p)$ s.t.\
\begin{enumerate}[label=(\arabic*),ref=\ref{conditions.Q}\arabic*]
 \item $(s^p,F^p) \in \Q_{\mathcal G}$

\item $\bar m^p $ is a finite partial function into $\N$ from the set of words $w \in W_{\mathcal G,X}\setminus\mathcal G$ such that no proper subword $w'$ of $w$ is conjugate to $w$.

\item\label{code} For any $w\in \dom(\bar m^p)$ there is $l\in\omega$ such that $w[s^p]$ exactly codes $z\restriction l$ with parameter $\bar m^p(w)$.

\item\label{disjoint} If $w, w' \in \dom(\bar m^p)$ and $w \neq w'$, 
\[
\set(w, s^p, \bar m^p(w)) \cap \set(w', s^p, \bar m^p(w')) = \emptyset
\]
\end{enumerate}
\item\label{order.Q}
$(s^q,F^q,  \bar m^q)\leq (s^p,F^p,  \bar m^p)$  if and only if
\begin{enumerate}[label=(\arabic*),ref=\ref{order.Q}\arabic*]
\item $(s^q,F^q) \leq_{\Q_{\mathcal G}} (s^p,F^p)$,
\item $\bar m^q$ extends $\bar m^p$ as a function.
\end{enumerate}
\end{enumerate}
\end{definition}
\noindent
We write any condition  $p \in \Q$ as $(s^p, F^p, \bar m^p)$ if we want to refer to the components of that condition.

Note that \eqref{code} ensures that for any $p\in \Q$ and $w \in \dom(\bar m^p)$ the path under $(w,s^p)$ of $\bar m^p(w)$ is finite (although other paths may be eventually periodic and thus infinite).
Also note that by \eqref{disjoint}, $\vert\mathcal G\vert$ is collapsed to $\omega$ by $\Q$ whenever $\mathcal G$ is uncountable in the ground model.

\medskip

For a $(\Vee,\Q)$-generic $G$, as in the previous section we let
\[
\sigma_G = \bigcup_{p\in G} s^p.
\]
We now show in a series of lemmas that $\rho_{\mathcal G,\sigma_G}\colon\mathcal G*\bF(X)\to \langle \mathcal G, \sigma_G \rangle$ is a faithful cofinitary representation and for any $w \in W_{\mathcal G, X}\setminus\mathcal G$ which does not have a proper conjugate subword, 
$w[\sigma_G]$ codes $z$.
The reader should note that this immediately implies that for any $g \in \mathcal G\setminus \{\one\}$ and any $\sigma\in \langle \mathcal G, \sigma_G \rangle$ either $z$ is coded by $\sigma$ or by $g\sigma$.

\medskip

We begin with a Lemma showing that $\sigma_{G}$ is forced by $\Q$ to be totally defined on $\N$.

\begin{lemma}[Domain Extension]\label{l.domain.ext.coding}~
\begin{enumerate}[label={\arabic*.},ref=\arabic*]
\item For any $n\in \N$, the set $D^{\textup{d}}_n = \{q\in\Q \;\colon n \in \dom(s^q)\}$ is dense in $\Q$.

\item\label{l.strong.version} In fact, suppose $p\in \Q$, $n\in \N$, $w^*\in W_{\mathcal G, X}\setminus \mathcal G$ is not conjugate to any of its proper subwords, 
$n$ is the last value of $\mpath(w^*,s^p, m^*)$ and this path terminates before an occurrence of $X$.
Then one can find $q\in \Q$ such that $q\leq p$ and $\mpath(w^*,s^q,  m^*)$ contains exactly one more application of $X$, and no further application of $X^{-1}$, than does $\mpath(w^*,s^p, m^*)$.
\end{enumerate}
\end{lemma}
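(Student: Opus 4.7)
The plan is to construct $q \leq p$ by extending $p$ via $s^q := s^p \cup \{(n,n')\}$, $F^q := F^p$, $\bar m^q := \bar m^p$, for a value $n' \in \N$ drawn from a suitable set of ``safe'' candidates (with $q := p$ if $n \in \dom(s^p)$ already). The safe values of $n'$ arise as the intersection of three families of constraints: (i) those from the proof of Lemma~\ref{l.domain.ext.simple} applied to the underlying $\Q_{\mathcal G}$-condition $(s^p, F^p)$, which ensure injectivity of $s^q$ and the $\Q_{\mathcal G}$-order condition for $F^p$; (ii) disjointness \eqref{disjoint}, requiring $n'$ (and any new values that enter an extended path) to avoid the finitely many points on each existing path $P_w := \mpath(w, s^p, \bar m^p(w))$ for $w \in \dom(\bar m^p)$---these paths are finite by the non-periodicity of $z$, as noted after Definition~\ref{d.Q}; and (iii) the exact-coding condition~\eqref{code}.

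Family~(iii) is the delicate one. By an analogue of Lemma~\ref{l.path.control.simple}, Case~2, $P_w$ is unchanged under $s^q$ unless $n$ is the last value of $P_w$ before an occurrence of $X$, in which case the added $X$-application prolongs $P_w$ through $n'$. To preserve~\eqref{code} in that exceptional case, trace the letters of $w$ cyclically past the added $X$, and let $g \in \mathcal G$ be the product of group-element letters preceding the first subsequent occurrence of $X$ or $X^{-1}$. Requiring $g(n') \notin \dom(s^p) \cup \{n\}$ or $g(n') \notin \ran(s^p) \cup \{n'\}$ (as appropriate) aborts the prolonged path within the current iteration of $w$, preserving the coding of $z\restriction l$ with the same $l$. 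If only group elements appear until $m_{l \cdot |w|}$, then an additional mod-$2$ constraint aligns $m_{l \cdot |w|} = g(n')$ with $z(l)$, extending the coding by one step; the argument then iterates into the next iteration of $w$. Each individual constraint excludes at most a finite set or at most half of $\N$, so the safe set is still infinite.

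For Part~2, I impose one further family of constraints controlling $\mpath(w^*, s^q, m^*)$: exactly one more $X$-application and no new $X^{-1}$-application. The first is automatic from adding $(n,n')$, and the second is handled in the same manner as in~(iii) with $w^*$ in place of $w$: taking $g \in \mathcal G$ to be the product of group-element letters in $w^*$ appearing cyclically between the added $X$ and the next $X^{\pm 1}$, we select $n'$ so that the appropriate failure occurs at that letter---for cofinitely many $n'$, since $g$ is a cofinitary permutation and the forbidden sets $\dom(s^p) \cup \{n\}$, $\ran(s^p) \cup \{n'\}$ are finite. The main obstacle is coordinating all these constraint families simultaneously across the finitely many words $w \in \dom(\bar m^p)$ and, for Part~2, the additional $w^*$; but since each family contributes only a finite or co-infinite bad set of candidates, and only finitely many words are involved, an appropriate $n'$ can be chosen. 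Verifying $q \in \Q$ together with $q \leq p$ is then a routine unpacking of the conditions arranged above.
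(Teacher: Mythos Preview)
Your overall plan matches the paper's: extend $s^p$ by a single pair $(n,n')$, leave $F^p$ and $\bar m^p$ unchanged, and choose $n'$ from a cofinite set satisfying the analogue of \eqref{e.domain.ext.simple}/\eqref{e.domain.ext.path}, together with one parity requirement coming from the coding clause~\eqref{code}. However, there is a real gap in how you justify that a suitable $n'$ exists.

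You phrase family~(iii) as if a parity constraint may arise \emph{for each} $w\in\dom(\bar m^p)$, and then conclude that ``each family contributes only a finite or co-infinite bad set \dots\ so an appropriate $n'$ can be chosen.'' That inference is invalid: a parity constraint of the form $g(n')\equiv z(l)\pmod 2$ rules out a set that is neither finite nor cofinite, and two such constraints can easily be incompatible (for instance if $g_1=g_2$ but the required parities differ), leaving no admissible $n'$ at all. What saves the argument---and what the paper uses explicitly---is clause~\eqref{disjoint} of Definition~\ref{d.Q} applied to the \emph{old} condition $p$: since the sets $\set(w,s^p,\bar m^p(w))$ for $w\in\dom(\bar m^p)$ are pairwise disjoint, the value $n$ can occur in at most \emph{one} such path, and hence at most one parity constraint is ever imposed. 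You need to say this; your family~(ii) only invokes \eqref{disjoint} to keep \emph{new} path values disjoint, which is a different issue.

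A smaller point: the sentence ``the argument then iterates into the next iteration of $w$'' is unnecessary once you actually use the constraints behind Lemma~\ref{l.path.control.simple}. Those constraints (your family~(i), suitably strengthened to the analogue of \eqref{e.domain.ext.path} with $E\supseteq\ran(\bar m^p)$) already force the $(w,s^q)$-path of $\bar m^p(w)$ to terminate after exactly one more application of $X$, possibly followed by a single group letter; there is no further iteration to control. The paper's proof simply checks that the one new value $g(n')$ (when the added $X$ is critical) has the correct parity, and is done.
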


Before we prove the lemma, to avoid repetition, we introduce the following terminology:
For $w \in W_{\mathcal G,X}$ and $j\in \{-1,1\}$,
call an occurrence of $X^j$ in $w$ \emph{critical} if there is no occurrence of $X$ or $X^{-1}$ in $w$ to its left.
Otherwise, we call it an \emph{uncritical occurrence}.
Clearly, it is through a critical occurrence of $X$ (resp.\ $X^{-1}$) in some word in $\dom(\bar m)$ that the coding requirements from \eqref{code} restrict our possibilities to extend $\dom(s^p)$ (resp.\ $\ran(s^p)$).

\begin{proof}[Proof of Lemma \ref{l.domain.ext.coding}]
Let $p\in \Q$, $w^*\in W_{\mathcal G, X}$, and $m^*, n\in \N$ be as in the statement of the lemma and suppose $n \not\in \dom(s^p)$.
We will find $n'$ such that for $s' = s^p \cup \{ (n,n')\}$, $q=( s', F^p,  \bar m^p)$ is a condition stronger than $p$.

\medskip

Write $s$ for $s^p$ and $\bar m$ for $\bar m^p$.
Let 
\[
E= \ran(s) \cup\dom(s) \cup\{ n\} \cup \ran(\bar m)\cup \{ m^*\},
\]
and let $F^*$ consist of all words which are a subword of a circular shift of a word in $F^p\cup\{ w^*\}$. 
The first requirement we make is that $n'$ be chosen such that
\begin{equation}\label{e.domain.ext.coding}
\begin{aligned}
n' &\notin  \bigcup \{ \fix(w[s]) \colon w \in F^*\setminus\{ \emptyset\}\} \text{ and}\\
n' &\notin  \bigcup \big\{ g^{-1}w^i[s]  [E] \colon i \in \{-1,1\}, w \in F^*, g\in F^*\cap\mathcal G \big\}.
\end{aligned}
\end{equation}
Note that \eqref{e.domain.ext.coding} excludes only finitely many possible values for $n'$.
The taking of preimages under $g\in F^*\cap\mathcal G$ in \eqref{e.domain.ext.coding} 
serves the sole purpose of ensuring that the following holds:
\begin{equation}\label{e.n'.unused}
g(n') \notin  \bigcup \{  \set(w,s, \bar m(w)) \colon w\in \dom(\bar m) \}
\end{equation}
for $g=\one$ as well as for all $g\in\mathcal G\setminus\{ \one\}$ occurring in a word from $F^p$.
\medskip

Now suppose for some $w \in \dom(\bar m)$, $n$ appears in the $(w,s)$-path of $\bar m(w)$ before a critical occurrence of $X$. 
In this case, we must make an additional requirement:
Noting that by \eqref{disjoint} there is at most one such $w\in \dom(\bar m)$, let this unique word be denoted by $w^{**}$.
Let $l$ be such that $w^{**}[s]$ exactly codes $z\restriction l$ with parameter $\bar m(w^{**})$.
Further, suppose $w^{**}=gXw'$, where $w' \in W_{\mathcal G, X}$ and we allow $g \in \mathcal G$ to be $\one$ but no cancellation in $ X w'$.
Now in addition to \eqref{e.domain.ext.coding}, require that $g(n')$ be even if $z(l)=0$ and odd if $z(l)=1$. 
This is possible as \eqref{e.domain.ext.coding} excludes only finitely many values.
Note that by choice of $n'$ and Lemma~\ref{l.path.control.simple}, only for this one $w \in \dom(\bar m^q)$ does the $(w,s')$-path differ from the $(w,s)$-path; and as $w$ has no proper conjugate subword, the $(w,s')$-path does not go past the next critical occurrence of $X$ or $X^{-1}$. Thus \eqref{code} holds of $q$ by construction.

\medskip

To see that $q$ is a condition, it remains to verify  \eqref{disjoint}. 
We have seen in the proof of Lemma~\ref{l.path.control.simple} that 
 at most one path starting in $\dom(\bar m^p)$ acquires new values when passing to $q$, and these new values are $n'$ and possibly  $g(n')$,
where $g\in \mathcal G\setminus\{\one\}$ is a subword of a cyclic shift of a word in $\dom(\bar m)$.
Thus requirement \eqref{disjoint} holds of $q$ by \eqref{e.n'.unused}.

We end the proof of Part 1 of the lemma by quoting the proof of the Domain Extension Lemma for $\Q_{\mathcal G}$ to conclude that  $q \leq p$.

\medskip

For Part \ref{l.strong.version} of the lemma note that by the proof of Lemma \ref{l.path.control.simple}
indeed the $(w^*,s)$-path of $m^*$ contains exactly one more application of $X$, and no further application of $X^{-1}$, than does the $(w^*,s')$-path of $m^*$.
\end{proof}

The next lemma shows that $\Q$ forces $\sigma_G$ to be onto $\N$.
\begin{lemma}[Range Extension]\label{l.range.ext}~
\begin{enumerate}[label=\arabic*.]
\item For any $n\in \N$, the set $D^{\textup{r}}_n = \{q\in\Q \;\colon n \in \ran(s^q)\}$ is dense in $\Q$. 

\item In fact, suppose $p\in \Q$, $n\in \N$, and $m^*\in \N$ are such that for some $w^*\in W_{\mathcal G, X}\setminus\mathcal G$ not conjugate to any of its proper subwords,
$n$ is the last value of $\mpath(w^*,s^p, m^*)$ and this path terminates before an occurrence of $X^{-1}$.
Then one can find $q\in \Q$ such that $q\leq p$ and $\mpath(w^*,s^q, \bar m^*)$ contains exactly one more application of $X^{-1}$, and no further application of $X$, than does $\mpath(w^*,s^p, \bar m^*)$. 
\end{enumerate}
\end{lemma}
\begin{proof}
The lemma is entirely symmetrical to the Domain Extension Lemma.
By symmetry, the proofs of Lemmas \ref{l.domain.ext.cont}, \ref{l.domain.ext.simple}, \ref{l.path.control.simple} and \ref{l.domain.ext.coding} can easily be adapted.
\end{proof}

By the previous two lemmas, $\forces_{\Q} \sigma_{\dot G} \in S_\infty$.
By the next lemma it is forced that for all $w \in W_{\mathcal G, X}\setminus \mathcal G$ which are not conjugate to any of their proper subwords, $w[\sigma_{\dot G}]$ codes $z$,
as promised.
\begin{lemma}[Generic Coding]\label{l.coding}
For any $w \in W_{\mathcal G,X}\setminus \mathcal G$ without a proper conjugate subword and any $l \in \N$, let $D^{\textup{code}}_{w,l}$ denote the set of $q\in\Q$ such that $w \in \dom(\bar m^q)$ and for some $l' \geq l$,
$q$ exactly codes $z\restriction l'$ with parameter $\bar m^q(w)$. Then $D^{\textup{code}}_{w,l}$ is dense in $\Q$.
\end{lemma}
\begin{proof}
Let $w$ and $l$ be as above, and fix $p\in \Q$.
We may assume $w \in \dom(\bar m^p)$: Otherwise, find $n'\in\N$ such that $n' \equiv z(0) \pmod{2}$ and   \eqref{e.domain.ext.coding} holds with $E=\dom(s^p)\cup\ran(s^p)\cup\ran(\bar m^p)$ and $F^*$ equal to the set of subwords of circular shifts of words in $F^p\cup\{w\}$, and let $p' = (s^p, F^p, \bar m^p\cup\{ (w, n')\})$.
By \eqref{e.domain.ext.coding} and by the proof of Lemma \ref{l.domain.ext.coding}, \eqref{disjoint} is satisfied for $p'$ and the $(w,s^{p})$-path of $n'$ will terminate before the right-most application of $X$ or $X^{-1}$ in $w$.
As $s^p = s^{p'}$, this suffices to show $p'$ is a condition stronger than $p$.

So supposing $w\in \dom(\bar m^p)$, let $m$ be the last value of the $(w,s^p)$-path of $\bar m(w)$ and assume this path terminates before an occurrence of the letter $X$.
By the Domain Extension Lemma, we may find $q \leq p$ such that $m \in \dom(s^{q})$ and the $(w,s^{q})$-path at
$\bar m(w)$ terminates either at the next step or after one further application of an element of $\mathcal G\setminus\{\one\}$.

If instead the $(w,s^p)$-path of $\bar m(w)$ terminates before an occurrence of the letter $X^{-1}$,
argue similarly using the Range Extension Lemma.

Repeating the argument if necessary, we obtain a condition $q$ such that $s^q$ exactly codes $z\restriction l$.
\end{proof}

By the next two lemmas 
$\langle \mathcal G, \sigma_G\rangle$ is forced to be cofinitary.
The reader may care to notice that the proofs of the remaining lemmas in this section are almost identical (or in the case of Lemma~\ref{l.rho.injective} at least not very different) for  $\langle \Q_{\mathcal G}, \leq_{\mathcal G}\rangle$---excepting of course Lemma~\ref{t.Q}\eqref{t.Q.coding} which doesn't hold for  $\langle \Q_{\mathcal G}, \leq_{\mathcal G}\rangle$.
\begin{lemma}\label{l.words}
If $w\in W_{\mathcal G,X}$, the set $\{q\in\Q \;\colon w \in F^q)\}$ is dense in $\Q$. 
\end{lemma}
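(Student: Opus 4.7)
The plan is to argue that this lemma is essentially immediate from the definition of $\Q$, once we observe that we may simply take $q$ with the same $s$-part and $\bar m$-part as $p$, and just enlarge $F^p$ by the single word $w$. First I would dispose of the two trivial cases: if $w\in \mathcal G$ then the statement is vacuous (no condition can have $w\in F^q$, since $F^q\subseteq W_{\mathcal G,X}\setminus\mathcal G$), and if $w\in F^p$ already then $q=p$ works. So I assume $w\in W_{\mathcal G,X}\setminus\mathcal G$ and $w\notin F^p$.

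Next I would define
\[
q = (s^p,\; F^p\cup\{w\},\; \bar m^p)
\]
and verify that $q\in \Q$ by checking the four requirements in Definition \ref{d.Q}(\ref{conditions.Q}). The pair $(s^q,F^q)=(s^p,F^p\cup\{w\})$ lies in $\Q_{\mathcal G}$ since $s^p$ is injective and $F^p\cup\{w\}$ is a finite subset of $W_{\mathcal G,X}\setminus\mathcal G$. The function $\bar m^q=\bar m^p$ is still a partial function into $\N$, now with domain contained in $F^q$ since $\dom(\bar m^p)\subseteq F^p\subseteq F^q$. Conditions \eqref{code} and \eqref{disjoint} carry over verbatim from $p$, because both $s$ and $\bar m$ are unchanged.

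Then I would verify $q\leq p$ by checking the two clauses of Definition \ref{d.Q}(\ref{order.Q}). Clause (2) is immediate since $\bar m^q=\bar m^p$. For clause (1), I need $(s^q,F^q)\leq_{\Q_{\mathcal G}} (s^p,F^p)$. The inclusions $s^q\supseteq s^p$ and $F^q\supseteq F^p$ are clear. The nontrivial clause in Definition \ref{d.Q.simple}(\ref{d.Q.simple.order}) requires that for every $w'\in F^p$ and every $m\in\fix(w'[s^q])$ there is a non-empty subword $w''$ of $w'$ with $\set(w',s^q,m)\cap \fix(w''[s^p])\neq\emptyset$. Since $s^q=s^p$, such $m$ already lies in $\fix(w'[s^p])$; since $w'\in F^p\subseteq W_{\mathcal G,X}\setminus\mathcal G$, the word $w'$ itself is non-empty, so we may take $w''=w'$ and note that $m\in\set(w',s^q,m)\cap \fix(w'[s^p])$.

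I do not expect any real obstacle here: the lemma is a bookkeeping statement, not requiring any path analysis or any modification of $s^p$. The previous Domain and Range Extension Lemmas did all the hard work of extending the $s$-component while respecting the coding requirement \eqref{code}; by contrast, leaving $s^p$ and $\bar m^p$ untouched trivializes both the conditionhood check and the ordering check, and the only subtlety worth calling out is the convention that the empty word represents $\one\in\mathcal G$, which is why it is safe to use $w$ itself as the subword $w''$.
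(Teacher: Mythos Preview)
Your approach is exactly the paper's: the authors' entire proof is the single observation that $(s^p, F^p\cup\{w\},\bar m^p)$ is a condition, and your write-up simply unpacks this in full detail. One small correction: for $w\in\mathcal G$ the set $\{q: w\in F^q\}$ is empty, hence not dense, so this case is not ``vacuous'' but rather a harmless oversight in the lemma's hypothesis (the paper's own proof tacitly assumes $w\notin\mathcal G$, and the lemma is only ever applied to such $w$).
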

\begin{proof}
Given  $p\in\Q^z_{\mathcal G}$, let $q=(s^p, F^p\cup W,\bar m^p)$, where $W$ is the set of subwords of $w$. Then
$q \in\Q^z_{\mathcal G}$ and $q \leq p$.
\end{proof}
\begin{lemma}\label{l.fp}
If $p\in \Q$ and $w \in F^p$,
there is $N$ such that
$p\forces \fix(w[\sigma_{\dot G}])$ has size at most $N$.
\end{lemma}
\begin{proof}
Suppose $p$ and $w\in F^p$, and let the length of $w$ be $k$.
Let $N$ be the number of triples $(u,m,l)$ such that  $u$ is a subword of $w$, $m \in \fix(u[s^p])$, and $l \leq k$.
Towards a contradiction, assume $q \leq p$ forces that
$(w[\sigma_{\dot G}])$ has more than $N$ fixed points.
By strengthening $q$ if necessary, we may assume that $\left\vert \fix(w[s^q])\right\vert > N$.

As $q\leq p$, by \eqref{d.Q.simple.order} in Definition \ref{d.Q.simple}, for each $n\in \fix(w[s^q])$ 
there is a non-empty subword $w'$ of $w$ satisfying 
\[
\set(w,s^q,n) \cap \fix(w'[s^p])\neq \emptyset.
\]
So letting $\langle \hdots, m_1, m_0 \rangle$ be the $(w,s^q)$-path of $n$, pick a subword $u=u(n)$ of $w$, $m=m(n)\in \fix(u[s^p])$ and $l=l(n)\leq k$ so that
$m_l = m$, for each such $n$.

Note that the function $n \mapsto ( u(n),m(n), l(n) )$ is injective (since $w[s^q]$ is);
but it maps $\fix(w[s^q])$ to a set of size $N$, contradiction.
\end{proof}

The next lemma shows that our construction yields a group which is 
maximal with respect to permutations from the ground model:
For any $\tau \in S_\infty\setminus \mathcal G$,
$\forces_{\Q^z_{\mathcal G}}\langle\mathcal G,\sigma_{\dot G},\rangle\cup\{\check\tau\}$ is not contained in a cofinitary group.
In fact, the Lemma shows: For any $\tau \in S_\infty$ such that $\langle \mathcal G, \tau\rangle$ is cofinitary,
$\forces_{\Q^z_{\mathcal G}}\text{``}\{ n \colon \sigma_{\dot G}(n)= \check\tau(n)\}$ is infinite\text{''}.
We will prove a generalization, the $\P$-generic Hitting Lemma below.
\begin{lemma}[Generic Hitting]\label{l.generic.hitting}
Given $m\in\N$ and $\tau \in S_\infty$ such that $\langle \mathcal G, \tau\rangle$ is cofinitary, the set $D^{\textup{hit}}_{\tau,m} = \{q\in \Q\;\colon (\exists n\geq m) \; s^q(n)=\tau(n)\}$ is dense.
\end{lemma}
\begin{proof}
Let $p\in \Q$ and $m\in \N$ be given.
Find $n \in \N$ such that $n\geq m$, and such that letting $n'=\tau(n)$, $n'$ satisfies \eqref{e.domain.ext.coding} with $s=s^p$, $E=\dom(s^p)\cup\ran(s^p)\cup\{ n\}\cup\ran(\bar m)$ and $F^*$ equal to the set of subwords of circular shifts of words in $F^p$.

To see this is possible, note that \eqref{e.domain.ext.coding} holds for $n'=\tau(n)$ if and only if for $E'=\dom(s)\cup\ran(s)\cup\ran(\bar m)$,
\begin{equation}\label{e.generic.hitting}
\begin{split}
&n \notin  \tau^{-1}\Big[ \bigcup \big\{ \fix(w[s]) \colon w \in F^*\setminus\{ \emptyset\}\big\}\Big],\\
&n \notin  \tau^{-1} \Big[ \bigcup \big\{ g^{-1}w'[s]^i [E'] \colon i \in \{-1,1\}, w' \in F^*, g\in F^*\cap\mathcal G \big\}\Big],\text{ and}\\
&n \notin \bigcup \big\{ \fix(\tau^{-1} g^{-1}w'[s]^i) \colon i \in \{-1,1\}, w' \in F^*, g\in F^*\cap\mathcal G \big\}.
\end{split}
\end{equation}
The first two requirements obviously exclude only finitely many $n$;
the same holds for the last requirement in case $w' \notin\mathcal G$, when $w'[s]$ is finite.
Lastly, when $w' \in\mathcal G$, the last requirement excludes only finitely many $n$ as $\langle \mathcal G, \tau\rangle$ is cofinitary.
Thus, $n$ as above can indeed be found.

By the proof of the Domain Extension Lemma, letting $s' = s^p\cup\{ (n,\tau(n))\}$, $q=(s', F^p)$ is a condition stronger that $p$. 
\end{proof}
\begin{remark}\label{r.indestructible}
By standard properties of product forcing, the previous lemma is easily seen to imply the following:
If $G$ is $(\Vee, \Q^z_{\mathcal G})$-generic and $H$ is $(\Vee[G],\P)$-generic for a forcing $\P\in\Vee$ then $\langle \mathcal G, \sigma_G\rangle$ is maximal with respect to $S_\infty\cap \Vee[H]$,
i.e.\ if $\tau \in (S_\infty\setminus \mathcal G)\cap \Vee[H]$ there is no cofinitary group $\mathcal G' \in \Vee[G][H]$ such that $\mathcal G\cup\{ \sigma_G, \tau\} \subseteq \mathcal G'$.
This observation inspired our construction of a Cohen-indestructible mcg in the next section.
\end{remark}

The following immediately implies that
$\Q$ forces that $\rho_{\mathcal G, \sigma_G}$ is injective.
\begin{lemma}\label{l.rho.injective}
For any $w \in W_{\mathcal G,X}\setminus\{\one\}$, the set $D^{\textup{1-1}}_w$ of $q$ such that
$q \forces_{\Q} w[\sigma_{\dot G}] \neq \one$ is dense.
\end{lemma}
\begin{proof}
Fix $p \in \Q$ and $w \in W_{\mathcal G,X}\setminus\{\one\}$.
We may assume $w \notin \mathcal G$ and $w$ has no proper conjugate subwords.
Find $l$ such that $w^l \notin \dom(\bar m^p)$ and $q \in \Q$ stronger than $p$ such that
for some $m \not \equiv z(0) \pmod 2$ we have $m = \bar m^q(w^l)$ (this is possible as $m$ can be chosen from a cofinite set).
Then $q \forces_{\Q} w[\sigma_{\dot G}] \neq \one$ ($\one$ codes 
the sequence with constant value $m \bmod 2$ with parameter $m$).
\end{proof}

\medskip

Anticipating that in the next section we apply our forcing $\Q^z_{\mathcal G}$ over countable initial segments of $\eL$ satisfying only a very weak fragment of ZFC, we list crucial properties of $\Q^z_{\mathcal G}$ when forcing over such models:
\begin{lemma}\label{t.Q}
Suppose
$M$ is a transitive $\in$-model such that $\Q^z_{\mathcal G}\in M$, and for any $m \in \omega$, $\tau\in S_\infty\cap M$ such that $\langle \mathcal G,\tau\rangle$ is cofinitary, and $w \in W_{\mathcal G,X}\setminus \mathcal G$ we have 
$\{ D^{\textup{d}}_m, D^{\textup{r}}_m, D^{\textup{hit}}_{\tau,m}, D^{\textup{1-1}}_w \} \subseteq M$ and $D^{\textup{code}}_{w,m} \in M$ when $w$ has no proper conjugate subwords.
Then for any $(M,\Q^z_{\mathcal G})$-generic filter $G$, 
letting
\[
\sigma_G = \bigcup_{p\in G} s^p
\]
the following holds:
\begin{enumerate}[label=(\Roman*),ref=\Roman*]
\item\label{t.Q.cof} $\rho_{\mathcal G, \sigma_G}\colon \mathcal G * \bF(X) \to \langle \mathcal G, \sigma_G\rangle$ is a faithful cofinitary representation.
\item\label{t.Q.coding} For any word $w \in W_{\mathcal G, X}\setminus\mathcal G$ which does not have a proper subword $w'$ which is conjugate to $w$, we have that $w[\sigma_G]$ codes $z$ in the sense of definition \ref{d.coding}.
\item\label{t.Q.max} For any $\tau \in \cofin(S_\infty) \cap M$ such that $\tau\notin\mathcal G$, there is no cofinitary group $\mathcal G'$ such that
$\langle \mathcal G, \sigma_G\rangle \cup \{ \tau \} \subseteq \mathcal G'$.
\end{enumerate}
\end{lemma}
\begin{proof}
As $G$ intersects $D^{\textup{d}}_m$ for each $m\in\N$, $\sigma_G$ is total.
By analogous arguments, \eqref{t.Q.cof}, \eqref{t.Q.coding} and \eqref{t.Q.max}
are obtained using $D^{\textup{r}}_m$, $D^{\textup{code}}_{w,m}$, $D^{\textup{hit}}_{\tau,m}$, and $D^{\textup{1-1}}_w$.
\end{proof}

The next lemma, like its precursor \cite[Theorem 4.1]{fischer-maximal}, will help us to show Cohen-indestructibility in \S\ref{s.main}, in a situation where as above, the ground model satisfies only a weak fragment of ZFC (see also Remark \ref{r.indestructible}).

\begin{lemma}[$\P$-generic hitting]\label{l.cohen.generic.hitting}
Let an arbitrary forcing $\P$, a $\P$-name $\dot \tau$, a condition $(p,q) \in \P\times\Q$ and $k\in \omega$ be given and
suppose $p\forces_{\P}\text{``} \dot \tau\in S_\infty$ and $\langle \check{\mathcal G}, \dot \tau\rangle$ is cofinitary\text{''}.

Then there is $(p',q')\in\P\times\Q$ such that
$(p',q')\leq_{\P\times\Q} (p,q)$ and
\[
(p',q')\forces_{\P\times\Q} (\exists n\in\N)\; {n> k}\wedge{\sigma_{\dot G}(n)=\dot\tau(n)}.
\]
\end{lemma}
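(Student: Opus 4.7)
The plan is to adapt the proof of the ordinary Generic Hitting Lemma (Lemma~\ref{l.generic.hitting}) to the product forcing $\P \times \Q$. The first task is to extract from the proof of the Domain Extension Lemma (Lemma~\ref{l.domain.ext.coding}) the ``forbidden set'' $B(n)$ associated to $q$: for all $n$ outside a finite set $S$ depending on $q$ and $k$, one has $B(n) = B_0 \cup \{h(n) : h \in H\}$, where $B_0 \subset \N$ and $H \subset \mathcal G$ are finite sets determined by $q$, and whenever $m \notin B(n) \cup \{n\}$, setting $q' := (s^q \cup \{(n,m)\}, F^q, \bar m^q)$ yields a valid condition of $\Q$ with $q' \leq q$.

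The main step is then to locate $n > k$ with $n \notin S$, a value $m \notin B(n) \cup \{n\}$, and an extension $p' \leq p$ in $\P$ with $p' \forces_\P \dot\tau(n) = m$. Given such a triple, with $q'$ as above $(p', q') \leq (p, q)$ satisfies $(p', q') \forces_{\P \times \Q} \sigma_{\dot G}(n) = m = \dot\tau(n)$, establishing the conclusion.

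To prove such a triple exists, I would argue by contradiction. If not, then by density in $\P$, $p \forces_\P \dot\tau(n) \in B(n) \cup \{n\}$ for almost all $n > k$. Passing to a $\P$-generic extension $\Vee[\bar H]$ containing $p$, injectivity of $\tau \in S_\infty$ forces $\tau(n) \in B_0$ for only finitely many $n$, so for infinitely many $n > k$ one has $\tau(n) = h(n)$ for some $h \in H \cup \{\one\}$. Pigeonhole on the ground-model finite set $H \cup \{\one\}$ then yields a fixed $h^* \in H \cup \{\one\}$ and a condition $p^* \leq p$ forcing that $\tau(n) = h^*(n)$ for infinitely many $n$. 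The case $h^* = \one$ is immediately contradictory: $p^* \forces_\P \fix(\dot\tau)$ is infinite, against the hypothesis that $\dot\tau$ is cofinitary.

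I expect the main obstacle to be the remaining case $h^* \in H \setminus \{\one\}$: the pigeonhole only gives that $p^* \forces_\P$ ``$\dot\tau \cdot (h^*)^{-1}$ has infinitely many fixed points'', which does not by itself contradict $\dot\tau$ being cofinitary. Resolving this delicate case requires more than the Domain Extension analysis alone. I would attempt to handle it by exploiting the specific form of $h^* \in H$ as a product of ``group pieces'' of words appearing in $F^q$, in combination with the $\Q$-forced cofinitariness of the word $X \cdot (h^*)^{-1}$ evaluated at $\sigma_{\dot G}$; one plausible route is to pre-extend $q$ to a condition whose associated pigeonhole candidates have been reduced to $\{\one\}$, thereby driving the argument into the clean contradictory case.
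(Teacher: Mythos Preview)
Your analysis is considerably more careful than the paper's own argument, which is a two-sentence direct proof: choose $n,n'$ with $p' \forces \dot\tau(n)=n'$, $n\notin\dom(s^q)$, and $n'$ satisfying \eqref{e.domain.ext.coding}, asserting this is possible ``as the above requirements for $n$ and $n'$ exclude only finitely many $n$ and finitely many $n'$ and as $p\forces \dot\tau$ is cofinitary.'' You have correctly isolated exactly the point the paper glosses over, namely that the forbidden set for $n'$ genuinely depends on $n$ through terms of the form $h(n)$ with $h$ ranging over a finite $H\subseteq\mathcal G$, so that cofinitariness of $\tau$ alone does not obviously suffice.

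However, your proposed resolution---pre-extending $q$ so as to shrink $H$ to $\{\one\}$---cannot work. If $g^{-1}X\in F^q$ for some $g\in\mathcal G\setminus\{\one\}$, then $g^{-1}\in F^*\cap\mathcal G$, and the requirement $n'\neq g(n)$ is forced upon you by the ordering on $\Q_{\mathcal G}$: setting $s'(n)=g(n)$ creates a new fixed point $n$ of $g^{-1}X[s']$ whose $(g^{-1}X,s')$-path $\{n,g(n)\}$ need not meet any $\fix(w'[s^q])$. In fact this shows the lemma is \emph{false as stated}: take $\P$ trivial, $\dot\tau=\check g$, and $q=(\emptyset,\{g^{-1}X\},\emptyset)$; one checks that any $q'\leq q$ with $s^{q'}(n)=g(n)$ forces $n\in\fix(g)\cup g^{-1}[\fix(g)]$, so for $k$ exceeding this finite set no extension exists. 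The paper's terse justification thus hides a genuine gap.

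The repair that makes both your argument and the paper's go through is to strengthen the hypothesis to $p\forces_{\P}\langle\mathcal G,\dot\tau\rangle$ is cofinitary and $\dot\tau\notin\mathcal G$---which is exactly what is available in the application (Theorem~\ref{t.main}, equation \eqref{e.cohen.forces}). With this, your pigeonhole case $h^*\in H\setminus\{\one\}\subseteq\mathcal G$ yields $(h^*)^{-1}\tau\in\langle\mathcal G,\tau\rangle$ with infinitely many fixed points, hence $(h^*)^{-1}\tau=\one$ and $\tau=h^*\in\mathcal G$, a contradiction.
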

\begin{proof}
Fix $\P$, $\dot\tau$, $(p,q)\in \P\times\Q$ and $k$ as in the statement of the lemma.
Let $G$ be $(\P,\Vee)$-generic and such that $p\in G$ and let $\tau = \dot \tau^G$.

Working in $\Vee[G]$,  follow the proof of the Generic Hitting Lemma: 
Find $n$ such that for $s=s^q$, $F^*$ equal to the set of subwords of circular shifts of a word in $F^q$ and  $E'=\dom(s^q)\cup\ran(s^q)\cup\ran(\bar m)$,
\eqref{e.generic.hitting} holds; in addition, require $\tau (n) > k$.
Letting $n'=\tau(n)$, find $p' \in G$ extending $p$ such that
\[
p'\forces_{\P} \dot \tau(\check n) = \check{n'}.
\]
Just as in the proof of the Generic Hitting Lemma, by choice of $n$, we have that for $E=E'\cup\{n\}$, \eqref{e.domain.ext.coding} holds in $\Vee$.
By the proof of the Domain Extension Lemma we can extend $q$ to $q'\in \Q$ such that
\[
q' \forces_{\Q} \sigma_{\dot G}(\check n)=\check{n'},
\]
and we are done.
\end{proof}

\section{A co-analytic Cohen-indestructible  mcg}\label{s.main}

We now use the ideas from the previous section to prove the main result of this paper.
At the same time, we give a new proof of Kastermans' result that there is a $\Pi^1_1$ mcg in $\eL$, based on the idea of finding generics over countable models.

\begin{theorem}\label{t.main}
Assume $\Vee=\eL$.
Let $\mathcal G_0$ be any countable cofinitary group, and fix $c\in 2^\N$ such that an enumeration of $\mathcal G_0$ is arithmetical in $c$ as a subset of $\N\times\N^\N$.
Then
there is a Cohen-indestructible $\Pi^1_1( c)$ maximal cofinitary group which contains $\mathcal G_0$ as a subgroup.
\end{theorem}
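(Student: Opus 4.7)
The plan is to build $\mathcal G=\bigcup_{\alpha<\omega_1}\mathcal G_\alpha$ as an increasing chain of countable cofinitary groups starting from $\mathcal G_0$, adjoining at each successor stage a single permutation $\sigma_\alpha$ which is generic, over a suitable countable transitive model, for the forcing $\Q^{z_\alpha}_{\mathcal G_\alpha}$ of Section \ref{ss.Q}. The real $z_\alpha$ is chosen so as to encode the stage $\alpha$ together with enough of the history of the construction, so that by Theorem \ref{t.Q}\eqref{t.Q.coding} every $\sigma\in\mathcal G_{\alpha+1}\setminus\mathcal G_\alpha$ carries a Turing-recoverable record of $z_\alpha$. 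This self-coding will supply the $\Pi^1_1(c)$ upper bound, while maximality against arbitrary $\tau\in S_\infty\setminus\mathcal G$ will be arranged via Lemma \ref{l.generic.hitting}, and its preservation under Cohen forcing via Lemma \ref{l.cohen.generic.hitting}.

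Using $V=\eL$ and the canonical $\Sigma_1$ well-order $<_\eL$, I enumerate in order type $\omega_1$ all pairs $(\tau_\alpha,\dot\tau_\alpha)$ where $\tau_\alpha\in S_\infty\cap\eL$ and $\dot\tau_\alpha$ is a nice $\C$-name forced to lie in $S_\infty$ and be cofinitary; bookkeeping arranges each such pair to appear cofinally often. At stage $\alpha$ let $\beta_\alpha<\omega_1$ be the least ordinal such that $M_\alpha:=\eL_{\beta_\alpha}$ contains $c$, $\mathcal G_0$, $\tau_\alpha$, $\dot\tau_\alpha$, and the previously constructed sequence, and satisfies a sufficiently rich fragment of $\mathrm{ZFC}^-+V=\eL$ to support the forcing arguments of Section \ref{ss.Q} and Lemma \ref{l.cohen.generic.hitting}. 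Let $z_\alpha\in 2^\N$ be the $<_\eL$-least non-periodic real coding the pair $(\beta_\alpha,\alpha)$ as a well-founded binary relation on $\N$. Because $\mathcal G_\alpha$ is countable, $\Q^{z_\alpha}_{\mathcal G_\alpha}\in M_\alpha$ is itself countable (a presentation of Cohen forcing), so I diagonalize against its countably many dense sets in $M_\alpha$ to produce $\sigma_\alpha\in\eL$ whose associated filter is $(M_\alpha,\Q^{z_\alpha}_{\mathcal G_\alpha})$-generic, and set $\mathcal G_{\alpha+1}:=\langle\mathcal G_\alpha,\sigma_\alpha\rangle$. By Theorem \ref{t.Q}, $\mathcal G_{\alpha+1}$ is cofinitary, isomorphic to $\mathcal G_\alpha\ast\bF(X)$, and every element of $\mathcal G_{\alpha+1}\setminus\mathcal G_\alpha$ codes $z_\alpha$ in the sense of Definition \ref{d.coding}. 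Take unions at limits to define $\mathcal G$.

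Maximality and Cohen-indestructibility now follow from bookkeeping combined with the Hitting Lemmas. For maximality, Lemma \ref{l.generic.hitting} ensures $\sigma_\alpha$ agrees with $\tau_\alpha$ at infinitely many $n$; given any $\tau\in S_\infty\setminus\mathcal G$, I fix $\alpha$ with $\tau=\tau_\alpha$ (so in particular $\tau\notin\mathcal G_\alpha$), whence $\tau\sigma_\alpha^{-1}$ is a non-identity element of $\langle\mathcal G,\tau\rangle$ with infinitely many fixed points. For Cohen-indestructibility, Lemma \ref{l.cohen.generic.hitting} applied inside $M_\alpha$ with $\P=\C$ and $\dot\tau=\dot\tau_\alpha$ yields that the set of $(p,q)\in\C\times\Q^{z_\alpha}_{\mathcal G_\alpha}$ forcing $\sigma_{\dot G}(n)=\dot\tau_\alpha(n)$ for some $n>k$ is dense for every $k$. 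By the product forcing lemma, it follows that in $M_\alpha[\sigma_\alpha]$ the corresponding set of Cohen conditions is dense, and since any $\C$-generic $G$ over $\eL$ is in particular $\C$-generic over $M_\alpha[\sigma_\alpha]$, we conclude that $\dot\tau_\alpha^G\sigma_\alpha^{-1}$ has infinitely many fixed points in $\eL[G]$, so $\dot\tau_\alpha^G$ cannot be added to $\mathcal G$ while preserving cofinitariness.

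For the $\Pi^1_1(c)$ upper bound, I would show that $\sigma\in\mathcal G$ iff $\sigma\in\mathcal G_0$, or else there exists $m\in\N$ such that, letting $z$ be the unique real with $\sigma^k(m)\equiv z(k)\pmod{2}$ for all $k$, the real $z$ codes, via a well-founded binary relation on $\N$, a countable ordinal $\gamma$ together with an ordinal $\alpha<\gamma$, and inside $\eL_\gamma$ the canonical recursion above (with inputs $\mathcal G_0,c$) produces at stage $\alpha+1$ a group containing $\sigma$. The first disjunct is $\Delta^1_1(c)$ by the hypothesis on $\mathcal G_0$. In the second, $z$ is arithmetic in $(\sigma,m)$; well-foundedness of the coded relation is $\Pi^1_1(z)$; and the statement that the recursion inside $\eL_\gamma$ places $\sigma$ in its stage $\alpha+1$ group is arithmetic in $z$ once well-foundedness is granted (as $\eL_\gamma$ is decoded from $z$ via Mostowski collapse). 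The leading $\exists m$ is a number quantifier, which does not raise complexity, so the second disjunct is $\Pi^1_1(c)$, as is their disjunction. The main technical obstacle is verifying the $\Sigma_1$-absoluteness of the recursion between $\eL_\gamma$ and $\eL$ at the relevant parameters, which forces the group computed inside $\eL_\gamma$ to coincide with $\mathcal G_{\alpha+1}$ in $\eL$ whenever $\gamma\ge\beta_\alpha$; this is arranged by choosing $z_\alpha$ to code $(\beta_\alpha,\alpha)$ with $\beta_\alpha$ large enough for all forcing names and genericity witnesses to live inside $\eL_{\beta_\alpha}$.
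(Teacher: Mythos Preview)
Your overall architecture---build $\mathcal G$ as an increasing union of countable cofinitary groups, adjoining at each stage a permutation generic for $\Q^{z_\alpha}_{\mathcal G_\alpha}$ over a countable level of $\eL$, so that every new element codes $z_\alpha$---matches the paper's. Your use of explicit bookkeeping of pairs $(\tau_\alpha,\dot\tau_\alpha)$ together with the product-forcing argument for Cohen-indestructibility is a legitimate alternative to the paper's approach; the paper does \emph{not} bookkeep, but instead observes that any $\dot\tau$ eventually lies in some $\eL_{\delta(\xi)}$, and then exhibits directly a dense subset of $\Q^{z_\xi}_{\mathcal G_\xi}$ in $\eL_{\delta(\xi)}$ (via Lemma~\ref{l.cohen.generic.hitting}) which the generic must meet, contradicting a putative bound on the number of agreements. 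Your route works too, but the paper's avoids the product-forcing detour.

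There is, however, a genuine gap in your $\Pi^1_1$ argument, and it lies precisely in the circularity you flag but do not resolve. You define $M_\alpha=\eL_{\beta_\alpha}$ first, then let $z_\alpha$ be the $<_\eL$-least real coding $(\beta_\alpha,\alpha)$ as a well-founded relation on $\N$. But any such real computes a surjection $\omega\to\beta_\alpha$; since $M_\alpha$ satisfies a fragment of $\mathrm{ZFC}^-$ strong enough for Mostowski collapse, $z_\alpha\in M_\alpha$ would yield $\beta_\alpha\in\eL_{\beta_\alpha}$, which is impossible. Hence $z_\alpha\notin M_\alpha$, so $\Q^{z_\alpha}_{\mathcal G_\alpha}\notin M_\alpha$ (condition~\eqref{code} of Definition~\ref{d.Q} refers to $z_\alpha$), and the dense sets from the Generic Coding Lemma are not in $M_\alpha$. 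Your ``$(M_\alpha,\Q^{z_\alpha}_{\mathcal G_\alpha})$-generic'' therefore need not satisfy~\eqref{t.Q.coding} of Theorem~\ref{t.Q}. The same problem recurs in your membership criterion: with $\gamma=\beta_\alpha$ the recursion inside $\eL_\gamma$ cannot even reach stage $\alpha$, since it requires $z_\alpha\notin\eL_\gamma$; and $\sigma_\alpha$, being generic over $\eL_{\beta_\alpha}$, does not lie in $\eL_{\beta_\alpha}$, so ``the group at stage $\alpha+1$'' is not available there either.

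The paper breaks this circularity by a different choice of $z_\xi$: it takes $z_\xi$ to be the canonical code for the \emph{theory} of $\eL_{\delta(\xi)}$, obtained from the definable surjection $\omega\twoheadrightarrow\eL_{\delta(\xi)}$ (which exists because $\delta(\xi)$ is chosen to project to $\omega$). This $z_\xi$ is determined by $\delta(\xi)$ alone, lives in $\eL_{\delta(\xi)+\omega}$, and conversely determines $\delta(\xi)$---in particular a well-order of type $\delta(\xi)$ is computable from $z_\xi$, so $\delta(\xi)<\omega_1^{z_\xi}\leq\omega_1^\sigma$ for any $\sigma$ coding $z_\xi$. The $\Pi^1_1$ bound then comes not from decoding a single ordinal out of $\sigma$, but from the fact that a witnessing model $M_y\cong\eL_{\delta(\xi)+\omega}$ can always be found with $y\in\eL_{\omega_1^\sigma}$, and the quantifier $\exists y\in 2^\N\cap\eL_{\omega_1^\sigma}$ is $\Pi^1_1$ by the Spector--Gandy/Mansfield--Solovay theorem. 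You would need either to adopt this device or some equivalent mechanism that makes $z_\alpha$ and the level over which you force mutually recoverable without circularity.
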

Note that for appropriately chosen $\mathcal G_0$, our method will produce a group which is isomorphic to Kastermans' group from \cite{kastermans-complexity}. 
On the other hand, they are not identical as subgroups of $S_\infty$, as one should not expect that his construction produce a Cohen-indestructible group.

\medskip 

Our argument is of the same type as Miller's classical construction given in \cite[\S 7]{miller-infinite}.
A very detailed exposition of this technique can be found in \cite[\S 3]{fischer-co-analytic}; the present account is parallel where possible.
We start by fixing notation and reviewing some facts,  in the course of which we also give a sketch of the proof of Theorem \ref{t.main}.

\medskip

The canonical well-ordering of $\eL$ is denoted by $\leq_{\eL}$.
A \emph{formula} is always a formula in the language of set theory (\cite[p.~1]{kanamori}); likewise for \emph{sentences}. 

\medskip

Given $x\in 2^\N$, let $E_x \subseteq \omega^2$ be the binary relation defined by
\[
m \mathbin{E_x} n \iff x(2^m 3^n)=0. 
\]
If it is the case that $E_x$ is well-founded and extensional, we denote by $M_x$ the set and by $\pi_x$ the map such that $\pi_x\colon \langle \omega, E_x\rangle \to \langle M_x, \in\rangle$ is the unique isomorphism of  $\langle \omega, E_x\rangle $ with a transitive $\in$-model. Recall:

\begin{fact}[see {\cite[13.8]{kanamori}}]\label{f.truth} If $E_x$ is well-founded and extensional
and $\phi$ is a formula  with $k$ free variables,
the following relations are arithmetical in $x$:
\begin{gather*}
\{ \left( m_1,\hdots,m_k \right) \in  \N \times\hdots \times \N \colon \langle M_x, \in\rangle\vDash \phi(m_1,\hdots,m_k) \},\\
\{ \left(  y, m\right) \in \N^\N \times \N \colon \pi_x(m)=y \}.
\end{gather*}
\end{fact}

\medskip

Recall that for $x,y\in 2^\N$, $y \in \Delta^1_1(x)$ means that $y$ is a Borel subset of $\N$ with a code recursive in $x$ (i.e.\ $y$ is hyperarithmetic in $x$; see e.g.\ \cite{mansfield-recursive}).
We shall use the following facts:
\begin{fact}\label{f.next}
  If $M_z = \eL_\delta$, there is $y \in \Delta^1_1(z)$ such that
$M_y = \eL_{\delta+\omega+\omega}$ (an elementary proof is implicitly given in \cite[3.6]{kastermans-analytic}).
\end{fact}

\begin{fact}[Mansfield-Solovay, see {\cite[Corollary~4.19,~p.~53]{mansfield-recursive}}]\label{f.mansfield-solovay}
 For any $\Pi^1_1(z)$ formula $\Psi(x)$, the formula $(\exists x \in \Delta^1_1(z)) \; \Psi(x)$ is equivalent to a $\Pi^1_1(z)$ formula .
\end{fact}

 We can now give a brief sketch of our construction of a maximal cofinitary group which is co-analytic and Co\-hen-\-in\-de\-struct\-ible:

If $\Vee=\eL$, one may use the definable well-ordering to enlarge a given countable cofinitary group $\mathcal G_0$ to a mcg:
Assuming by induction we have constructed $\mathcal G_\xi$, where $\xi <\omega_1$, let $\sigma_\xi$ be the $\leq_{\eL}$-least $\sigma\in S_\infty\setminus\mathcal G_\xi$ such that $\mathcal G_{\xi+1} = \langle \mathcal G_\xi, \sigma \rangle$ is cofinitary. Then $\mathcal G= \bigcup_{\xi<\omega_1} \mathcal G_{\xi}$ will be maximal cofinitary.
Using the above correspondence of countable models with elements of $2^\N$, one finds $\mathcal G$ to be $\Sigma^1_2$ in a code for $\mathcal G_0$.

We can `bound the existential quantifier' by altering the above construction so that at step $\xi$, an initial segment of $\eL$ witnessing the fact that $\sigma_\xi \in \mathcal G$ can be found  \emph{effectively}  in some data $z$ (i.e.\ is hyperarithmetic in $z$---here we use Fact \ref{f.next}); at the same time applying ideas from the previous section to ensure that $z$, in turn, can be found effectively in $\sigma_\xi$.
The resulting mcg is shown to be $\mathbf{\Pi}^1_1$ using Fact \ref{f.mansfield-solovay}.

\medskip

The `data' alluded to in the above sketch will be some $z\in2^\N$ such that $M_z$ `sees' the first $\xi$ steps of the construction.
The question of how to uniquely pick $z$ leads naturally to the notion of \emph{projectum}:
We say \emph{$\delta$ projects to $\omega$} if and only if there is a surjection from $\omega$ onto $\eL_\delta$ which is $\mathbf{\Sigma}_1$-definable (with a parameter) in $\eL_\delta$.

\begin{fact}[see \cite{jensen-fine}]\label{f.project}  The set of $\delta$ which project to $\omega$ is cofinal in $\omega_1$.
Moreover, if $\delta$ projects to $\omega$, so does $\delta+\omega$.
\end{fact}
Supposing $\delta$ projects to $\omega$, let $p$ be $\leq_{\eL}$-least parameter such that there is a surjection as above which is $\Sigma_1$  in $p$. Call the surjection $f$ so obtained the \emph{canonical surjection from $\omega$ onto $\eL_\delta$}.

\medskip

Finally, we proceed to prove our main theorem.
\begin{proof}[Proof of Theorem \ref{t.main}]
Assume $\Vee=\eL$, and
fix $\mathcal G_0$ and $c$ as in the theorem.
Since the argument relativizes to the parameter $c$, 
we may  suppress it and assume that an enumeration of $\mathcal G_0$ is arithmetic.
We may also suppose that $\mathcal G_0 \neq \{\one\}$---otherwise, just replace it by a larger arithmetic cofinitary group.

\medskip

We construct a sequence $\vec{\mathcal G}=\langle (\delta_\xi, z_\xi, \mathcal G_\xi, \sigma_\xi) \colon \xi < \omega_1 \rangle$ such that
\begin{itemize}
\item\label{clause.i} ${\delta_\xi}$ is a countable ordinal,
\item\label{clause.ii} $z_\xi \in 2^\N\cap \eL_{{\delta_\xi}+\omega}$,
\item\label{clause.G} $\mathcal G_\xi$ is a countable cofinitary group, and
\item\label{clause.iii} $\sigma_\xi \in S_\infty$,
\end{itemize}
and so that the following hold for each $\xi <\omega_1$:
\begin{enumerate}[label=(\roman*),ref=\roman*]

\item\label{clause.delta} ${\delta_\xi}$ is the least ordinal $\delta > \sup_{\nu<\xi} \delta_\nu$ such that
$\delta$ is a limit of limit ordinals and projects to $\omega$.

\item\label{clause.z} $z_\xi$ is obtained from the canonical surjection $f$ from $\omega$ onto $\eL_{\delta_\xi}$ as follows:  for $k\in\N$,
$z_\xi(k)=0 \iff ( k=2^m3^n \wedge f(m) \in f(n))$.

\item\label{clause.cof} $\mathcal G_\xi$ is the  compositional closure of $\{\sigma_\nu \colon \nu <\xi\}\cup\mathcal G_0$ in $S_\infty$.
\item\label{clause.ind}
$\mathcal G_\xi$ is cofinitary and $\mathcal G_\xi \in \eL_{\delta_\xi}$.
\item\label{clause.sigma} $\sigma_\xi = \sigma_{G}$, where $G$ is the unique $(\eL_{{\delta_\xi+\omega}}, \Q^{z_\xi}_{\mathcal G_\xi})$-generic 
obtained by hitting dense subsets of $\Q^{z_\xi}_{\mathcal G_\xi}$ in the order in which they are enumerated by the canonical surjection from $\omega$ onto $\eL_{\delta_\xi+\omega}$.
\item\label{clause.def}
Letting $\delta(\xi) = \big( \sup \{ \delta_\nu + \omega +\omega \colon \nu<\xi \}\big) +\omega$ for limit $\xi$ and $\delta(\xi)= \delta_{\xi-1} +\omega+\omega$ for successor $\xi<\omega_1$ we have 
\[
\langle (\delta_\nu, z_\nu, \mathcal G_\nu, \sigma_\nu) \colon \nu < \xi\rangle \in \eL_{\delta(\xi)}.
\]
\end{enumerate}
Obtaining such a sequence is straight-forward:
Having constructed $\vec{\mathcal G}\restriction \xi$,
\eqref{clause.delta} determines $\delta_\xi$  from $\langle \delta_\nu \colon \nu < \xi \rangle$ and \eqref{clause.z}  determines $z_\xi\in \eL_{\delta_\xi +\omega}$.
Note that $M_{z_\xi} = \eL_{\delta_\xi}$, and that by Fact \ref{f.project}, $\delta_\xi+\omega$ projects to $\omega$, as well.

Noting $\mathcal G_0 \in \eL_{\delta_0}$ for the case $\xi=0$, assume \eqref{clause.ind} by induction.  
Then \eqref{clause.sigma} uniquely determines $\sigma_\xi$ from ${\delta_\xi}$, $z_\xi$ and $\mathcal G_\xi$. 
That $\mathcal G_{\xi+1}$ is a cofinitary group follows by Lemma \ref{t.Q}.
Moreover, as the canonical surjection from $\omega$ to $\eL_{{\delta_\xi}+\omega}$ is in an element of 
$\eL_{{\delta_\xi}+\omega+\omega}$, so are $\sigma_\xi $ and $\mathcal G_{\xi+1}$. It follows that $\mathcal G_{\xi+1}\in\eL_{\delta_{\xi+1}}$ and the induction hypothesis \eqref{clause.ind} is verified for $\xi+1$.

It remains to verify \eqref{clause.ind} for limit $\xi < \omega_1$; for this, first show \eqref{clause.def}.
Observe that the inductive definition of $\vec{\mathcal G}$ by \eqref{clause.delta}, \eqref{clause.z} and \eqref{clause.sigma} involves only concepts which are absolute for initial segments of the $\eL$-hierarchy.
In fact, let $\Psi(x)$ be the formula that states that $x$ is the sequence obtained using said inductive definition restricted to ordinals in $\dom(x)$ and that there is no largest ordinal in the universe;
then $\vec g$ is equal to some initial segment of $\vec{\mathcal G}$ if and only if for any ordinal $\delta$ such that
$\vec g \in \eL_{\delta}$ we have $\eL_{\delta}\vDash \Psi(\vec g)$.
With this, \eqref{clause.def} follows by induction. 

In particular, it follows that $\mathcal G_\xi \in \eL_{\delta_\xi}$ for limit $\xi<\omega_1$, verifying \eqref{clause.ind}.
This finishes the inductive construction of $\vec{\mathcal G}$.

\medskip

We have just shown the following fact, crucial to the proof that $\mathcal G$ is $\Pi^1_1$:
\begin{fact}\label{c.Sigma_1}
There is a formula $\Psi(x)$ such that  $\vec g$ is equal to an initial segment 
$\langle (\mathcal G_\nu, \delta_\nu, z_\nu, \sigma_\nu) \colon \nu < \xi\rangle$ of $\vec{\mathcal G}$ if and only if
there is $\delta <\omega_1$ such that $\vec g \in \eL_\delta$ and $\eL_\delta\models \Psi(\vec g)$.
\end{fact}

Finally, we let
\[
\mathcal G= \bigcup_{\xi<\omega_1}\mathcal G_\xi,
\]
which is a  cofinitary group by \eqref{clause.ind} above.

To see that $\mathcal G$ is $\Pi^1_1$, we first show a weaker statement:

\begin{claim}\label{l.Sigma}
As a subset of $\N^\N$, $\mathcal G$ is $\Sigma^1_2$.
\end{claim}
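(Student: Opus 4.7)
The plan is to apply the classical Miller-style characterisation of sets produced by a definable transfinite recursion in $\eL$ via countable well-founded witnesses. First I would verify that the recursion defining $\langle(\delta(\xi), z_\xi, \sigma_\xi) : \xi<\omega_1\rangle$ in clauses \eqref{clause.delta}--\eqref{clause.sigma} is uniformly and absolutely defined across sufficiently tall transitive models of a fixed finite fragment $T$ of $\mathrm{ZF}^-+V=\eL$. Concretely, if $M = \eL_\gamma\models T$ and $\mathcal G_0 \in M$, then performing the recursion inside $M$ reproduces the very same triples $(\delta(\xi),z_\xi,\sigma_\xi)$ for every $\xi$ whose outputs already live in $M$: the ordinal $\delta(\xi)$ is defined by a minimality condition which is $\Sigma_1$ over $\eL_\gamma$, the real $z_\xi$ is obtained from $\eL_{\delta(\xi)}$ by the canonical definable surjection, and $\sigma_\xi$ is the union of the unique $(\eL_{\delta(\xi)},\Q^{z_\xi}_{\mathcal G_\xi})$-generic filter, which depends only on the $\eL$-enumeration of dense sets lying in $\eL_{\delta(\xi)+\omega}\subseteq M$. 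Consequently $\mathcal G\cap M = \mathcal G^M := \bigcup_{\xi<\omega_1^M}\mathcal G_\xi^M$.

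Using this absoluteness, I would write
\[
\sigma\in\mathcal G \iff \forall x\in 2^\N\;\bigl(\theta(x,\sigma)\to M_x\models\sigma\in\mathcal G\bigr),
\]
where $\theta(x,\sigma)$ asserts that $E_x$ is a well-founded extensional relation on $\omega$, that the resulting transitive collapse $M_x$ satisfies $T$, and that $\sigma\in M_x$. Since well-foundedness of $E_x$ is $\Pi^1_1$ in $x$ and the remaining clauses (including ``$M_x\models T$'' and ``$M_x\models\sigma\in\mathcal G$'', both of which unravel to arithmetic statements about the code $x$) are arithmetic in $(x,\sigma)$, the right-hand side is $\Pi^1_1$, which is exactly what the claim demands.

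The forward direction of the equivalence is immediate from the absoluteness established above: if $\sigma\in\mathcal G_0$ then $\sigma\in\mathcal G^{M_x}$ in every $M_x$ containing $\mathcal G_0$; if $\sigma=\sigma_\xi$ and $\sigma\in M_x$, then $\eL_{\delta(\xi)+\omega}\subseteq M_x$, so $M_x$ reconstructs the same $\sigma_\xi$ and hence sees $\sigma\in\mathcal G$. For the backward direction, suppose $\sigma\notin\mathcal G$; since $\Vee=\eL$, there is a countable $\alpha$ with $\sigma\in\eL_\alpha$, and by condensation (or by a L\"owenheim--Skolem argument inside $\eL$) one can locate $\gamma>\alpha$ for which $\eL_\gamma\models T$ and the internal construction has exhausted every stage below $\omega_1^{\eL_\gamma}$. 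Then $\sigma\notin\mathcal G^{\eL_\gamma}=\mathcal G\cap\eL_\gamma$, and picking any $x\in\eL$ that codes this countable $\eL_\gamma$ produces a concrete refutation of the right-hand side.

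The step I expect to demand the most care is the choice of the fragment $T$: it must be weak enough that ``$M_x\models T$'' is arithmetic in $x$, yet strong enough to ensure that the recursion inside any such $M_x$ is uniquely determined and matches the external one --- in particular, that the canonical $\eL$-well-order used in clause \eqref{clause.sigma} to enumerate dense sets is captured correctly, so that the resulting generic is genuinely unique. A finite fragment of Kripke--Platek set theory augmented by $V=\eL$ and the axiom ``there are unboundedly many $\alpha$ for which $\eL_\alpha$ projects to $\omega$'' is more than sufficient, and each of these demands can be verified arithmetically from the code $x$.
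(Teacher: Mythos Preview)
Your universal-quantifier formulation is a legitimate alternative to the paper's route, which instead expresses membership as $(\exists y\in 2^\N\cap\eL_{\omega_1^\sigma})\,\Phi(y,\sigma)$ and then invokes the Mansfield--Solovay theorem to turn this bounded existential into a $\Pi^1_1$ formula. Both are standard Miller-style arguments; yours is in fact closer to Miller's original presentation.

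There is, however, a genuine gap in your forward direction. You assert that ``if $\sigma=\sigma_\xi$ and $\sigma\in M_x$, then $\eL_{\delta(\xi)+\omega}\subseteq M_x$,'' but give no reason, and this is \emph{exactly} the place where the coding built into $\Q^{z_\xi}_{\mathcal G_\xi}$ must be invoked. A priori the permutation $\sigma_\xi$ might happen to lie in some $\eL_\alpha$ with $\alpha$ far below $\delta(\xi)$; then an $M_x=\eL_\gamma$ with $\alpha<\gamma<\delta(\xi)$ and $\eL_\gamma\models T$ would contain $\sigma_\xi$ yet be too short to run the recursion to stage $\xi$, and would (correctly, from its internal viewpoint) declare $\sigma_\xi\notin\mathcal G$, falsifying your equivalence. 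What rules this out is clause~\eqref{t.Q.coding} of Theorem~\ref{t.Q}: since $z_\xi\leq_T\sigma_\xi$ and $z_\xi$ encodes the theory of $\eL_{\delta(\xi)}$, any transitive model of $T$ containing $\sigma_\xi$ also contains $z_\xi$, hence a real coding a well-order of type $\delta(\xi)$, and this forces its ordinal height strictly above $\delta(\xi)$. The paper makes precisely this step explicit (``Since a well-order of length $\delta(\xi)$ is computable in $z_\xi$ and $z_\xi\leq_T\sigma_\xi$, we have $\delta(\xi)<\omega_1^{\sigma_\xi}$''); it is the entire point of the coding machinery developed in \S\ref{ss.Q}. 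Once you insert this justification, your argument goes through.
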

\noindent
\emph{Proof of Claim.} By \cite{mathias-weak} and \cite{devlin-constructibility} (or by \cite{jensen-fine}), we may fix a sentence $\Lambda$ such that whenever $M$ is transitive and
$\langle M, \in\rangle\vDash \Lambda$, we have $M = \eL_\delta$ for some $\delta$.
By the Fact \ref{c.Sigma_1}, 
$\sigma\in \mathcal G$ if and only if 
\begin{multline}\label{eq.G.M}
\text{ there exists a countable transitive set $M$  s.t.\  $\langle M,\in\rangle \vDash\Lambda $ and}\\
\text{for some $\vec{g}\in M$ of the form }\vec{g} =\langle (\delta'_\xi, z'_\xi, \mathcal G'_\xi, \sigma'_\xi) \colon \xi \leq \nu \rangle \\
\text{we have $\sigma=w[\sigma'_\nu]$ for some $w \in W_{\mathcal G'_\nu,X}$ and $\langle M,\in\rangle \vDash \Psi(\vec{g})$.}
\end{multline}
By Fact \ref{f.truth},  \eqref{eq.G.M} is equivalent to a $\Sigma^1_2$ formula
\begin{equation}\label{eq.G.y}
 (\exists y\in 2^\N)\; \Phi(y,\sigma)
\end{equation}
where $\Phi(y,\sigma)$ is a $\Pi^1_1$ formula expressing
that
$E_y$ is well-founded and extensional and $M_y$ witnesses \eqref{eq.G.M}, i.e.\ $\langle \omega, E_y\rangle\vDash \Lambda$ and 
for some $n,m \in \omega$, $\pi_y(m) = \sigma$ and
$\langle \omega, E_y\rangle  \vDash\text{``} \Psi(n) \wedge
n =\langle (\delta'_\xi, z'_\xi, \mathcal G'_\xi, \sigma'_\xi) \colon \xi \leq \nu \rangle
\wedge \sigma'_\nu = m\text{''}$.

\hfill \qed{\tiny Claim \ref{l.Sigma}.}
\begin{lemma}\label{l.Pi}
In fact, $\mathcal G$ is $\Pi^1_1$ (as a subset of $\N^\N$).
\end{lemma}
\noindent
\emph{Proof of Lemma.}
The proof rests on the fact that $y$ as in \eqref{eq.G.y} can be found effectively in $\sigma$.

First fix some arbitrary $h^* \in \mathcal G_0\setminus\{\one\}$ (this is not necessary
but serves to better emphasize the structure of the argument) noting that by assumption $h^*$ is arithmetic.

For $\sigma \in \mathcal G\setminus \mathcal G_0$, 
by \eqref{clause.def} we may take $M$ in \eqref{eq.G.M} to be $\eL_{{\delta_\nu}+\omega+\omega}$ where $\nu$ is least such that $\sigma \in \mathcal G_{\nu+1}$.
Thus by Fact \ref{f.next}, $y$ in \eqref{eq.G.y} can be chosen so that $y \in \Delta^1_1(z_\nu)$.
We show that
$z_\nu$ is arithmetic in $\sigma$: 
Letting $\sigma = w[\sigma_G]$, this holds by construction if  $w$ has no conjugate proper subword.
Otherwise, $h^* w$ has no conjugate proper subword and so $z_\nu$ is computable in $h^* \sigma$ (any arithmetic element of $\mathcal G_\nu\setminus\{\one\}$ would do here).
In either case
 we obtain `$\Rightarrow$' in the following (`$\Leftarrow$' is obvious):
\[
\sigma\in \mathcal G \iff (\exists y \in \Delta^1_1(\sigma)) \; \Phi(y,\sigma).
\]
By Fact \ref{f.mansfield-solovay}, the right-hand side can be rendered as a $\Pi^1_1$ formula, proving the Lemma.
\hfill \qed{\tiny Lemma \ref{l.Pi}.}

\medskip

Since any $\tau \in S_\infty$ appears in some $\eL_{\delta_\xi}$, maximality of $\mathcal G$ follows from \eqref{t.Q.max} of Theorem \ref{t.Q}, and \eqref{clause.sigma} above.
In fact, we show the stronger statement:

\begin{lemma}\label{l.indestructible}
$\mathcal G$ is Cohen-indestructible.
\end{lemma}
\noindent
\emph{Proof of Lemma.}
Towards a contradiction, fix a $\C$-name $\dot \tau$ and $p\in \C$ such that
$p\forces_{\C} \langle \check{\mathcal G}, \dot \tau \rangle$ is cofinitary. 

For each $n\in \N$, pick
$\vec{p}_n=  \langle p^n_k \colon k \in \omega \rangle$ such that $\{ p^n_k \colon k \in \omega \}$ is a maximal antichain in $\C$
of conditions deciding $\dot \tau(n)$: i.e.\ we may find, for each $n\in\N$, a sequence $\vec{m}_n= \langle m^n_{k} \colon k \in \omega\rangle$ such that for each $k$, 
\[
p^n_k \forces \dot \tau (\check n) = \check m^n_k.
\]
Further, find $\xi<\omega_1$ such that for each $n\in\N$, $\{ \vec{p}_n, \vec{m}_n\}\subseteq  L_{{\delta_\xi}}$.
We may assume (by strengthening $p$ if necessary) that there is $N$ such that
\begin{equation}\label{e.cohen.forces}
p\forces_{\C} \lvert \{ n \in \N \colon \check\sigma_\xi(n) = \dot \tau(n) \} \rvert = \check N.
\end{equation}

By repeatedly using Lemma \ref{l.cohen.generic.hitting}, we may extend any given condition in $\Q^{z_\xi}_{\mathcal G_\xi}$ to a condition $q\in \Q^{z_\xi}_{\mathcal G_\xi}$ such that for some $p' \in \C$ stronger than $p$ and for some
set $Z\subseteq \dom(s^q)$ of size $N+1$ we have
\begin{equation*}\label{e.membership.in.D}
(\forall n\in Z) \; p'\forces_{\C} \dot \tau(\check n) = \check{s}^q(\check n).
\end{equation*}
Just as in Lemma \ref{t.Q}, we must circumvent the use of the forcing relation: 
Since each $\vec{p}_n$ enumerates a maximal antichain, by extending $p'$ finitely many times, we can assume that for every $n\in Z$, $p' \leq_{\C}p^n_k$ for some $k$ such that $m^n_k = s^q(n)$.

Let $D$ be the set of $q\in \Q^{z_\xi}_{\mathcal G_\xi}$ such that for some $p' \in \C$ stronger than $p$ and for some
set $Z\subseteq \dom(s^q)$ of size $N+1$ we have
that for every $n\in Z$, $p'\leq_{\C}p^n_k$ for some $k$ such that $m^n_k = s^q(n)$.

We have just shown that $D$ is dense $\Q^{z_\xi}_{\mathcal G_\xi}$; as $D\in \eL_{\delta_\xi+\omega}$,
the generic which gave rise to $\sigma_\xi$ meets $D$ and we conclude
that for some 
$p' \in \C$ stronger than $p$ and for some
set $Z\subseteq\N$ of size $N+1$ we have
\[
(\forall n\in Z) \; p'\forces_{\C} \dot \tau(\check n) = \check{\sigma}_\xi(\check n),
\]
contradicting \eqref{e.cohen.forces}; thus, $\mathcal G$ is Cohen-indestructible.
\hfill \qed{\tiny Lemma \ref{l.indestructible}.}
\end{proof}
As an immediate corollary, we obtain (a strong form of) Theorem  \ref{t.continuum}:
\begin{corollary}\label{c.main}
The existence of a constructible $\Pi^1_1$ maximal cofinitary group of size $\omega_1$ is consistent with  arbitrarily large continuum (relative to \textup{ZFC}).
\end{corollary}
\begin{proof}
Note that if $\Psi(x)$ is the $\Pi^1_1$ formula which we constructed in Claim~\ref{l.Pi} and which defines membership in $\mathcal G$,
then the following holds in $\Vee$:  
\[
(\forall \sigma \in \N^\N) \; \Psi(\sigma) \iff \sigma \in \eL \wedge \Psi(\sigma)^{\eL}.
\]
This is because $\Psi(\sigma)$ is easily seen to imply $\sigma\in \eL$.
\end{proof}

\section{Questions}\label{s.questions}

Considering the many known models where some inequality holds between $\mathfrak{a}_g$ and
another  cardinal invariant of the continuum,
the methods developed in the present paper suggest to consider the following definable analogue:

For $\Gamma$ an arbitrary pointclass,
let $\mathfrak{a}_g(\Gamma)$ be the least cardinal $\kappa$ such that there is a mcg $\mathcal G\in \Gamma$ of size $\kappa$; if there is no mcg $\mathcal G\in\Gamma$, let $\mathfrak{a}_g(\Gamma)=\infty$.

\begin{question}
Which inequalities involving $\mathfrak{a}_g({\Pi^1_1})$ and other cardinal invariants of the continuum can be shown to hold consistently?
\end{question}
An earlier version of this paper asked whether there can be a Borel mcg; as mentioned in \S\ref{intro} this question has since been answered in the affirmative (see \cite{borel-mcg}, in which the authors also state as their goal to show in a future paper that a closed mcg exists; also compare \cite{on-shelah} which shows that there is a closed so-called \emph{maximal eventually different family}).

\bibliographystyle{amsplain}
\bibliography{mcg}

\providecommand{\bysame}{\leavevmode\hbox to3em{\hrulefill}\thinspace}
\providecommand{\MR}{\relax\ifhmode\unskip\space\fi MR }
\providecommand{\MRhref}[2]{%
  \href{http://www.ams.org/mathscinet-getitem?mr=#1}{#2}
}
\providecommand{\href}[2]{#2}
\begin{thebibliography}{10}

\bibitem{adeleke-embeddings}
S.~A. Adeleke, \emph{Embeddings of infinite permutation groups in sharp, highly
  transitive, and homogeneous groups}, Proc. Edinburgh Math. Soc. (2)
  \textbf{31} (1988), no.~2, 169--178. \MR{989749 (90e:20003)}

\bibitem{brendle-uniformity}
J{\"o}rg Brendle, Otmar Spinas, and Yi~Zhang, \emph{Uniformity of the meager
  ideal and maximal cofinitary groups}, J. Algebra \textbf{232} (2000), no.~1,
  209--225. \MR{1783921 (2001i:03097)}

\bibitem{cameron-cofinitary}
Peter~J. Cameron, \emph{Cofinitary permutation groups}, Bull. London Math. Soc.
  \textbf{28} (1996), no.~2, 113--140. \MR{1367160 (96j:20005)}

\bibitem{cameron-aspects}
\bysame, \emph{Aspects of cofinitary permutation groups}, Advances in algebra
  and model theory ({E}ssen, 1994; {D}resden, 1995), Algebra Logic Appl.,
  vol.~9, Gordon and Breach, Amsterdam, 1997, pp.~93--99. \MR{1683508
  (2000a:20003)}

\bibitem{devlin-constructibility}
Keith~J. Devlin, \emph{Constructibility}, Perspectives in Mathematical Logic,
  Springer-Verlag, Berlin, 1984. \MR{750828}

\bibitem{fischer-maximal}
Vera Fischer, \emph{Maximal cofinitary groups revisited}, MLQ Math. Log. Q.
  \textbf{61} (2015), no.~4-5, 367--379. \MR{3385738}

\bibitem{fischer-co-analytic}
Vera Fischer and Asger T{\"o}rnquist, \emph{A co-analytic maximal set of
  orthogonal measures}, J. Symbolic Logic \textbf{75} (2010), no.~4,
  1403--1414. \MR{2767976}

\bibitem{fischer-toernquist-template}
\bysame, \emph{Template iterations and maximal cofinitary groups}, Fund. Math.
  \textbf{230} (2015), no.~3, 205--236. \MR{3351471}

\bibitem{gao-definable}
Su~Gao and Yi~Zhang, \emph{Definable sets of generators in maximal cofinitary
  groups}, Adv. Math. \textbf{217} (2008), no.~2, 814--832. \MR{2370282
  (2009b:03128)}

\bibitem{hjorth-cameron}
Greg Hjorth, \emph{Cameron's cofinitary group conjecture}, J. Algebra
  \textbf{200} (1998), no.~2, 439--448. \MR{1610652}

\bibitem{borel-mcg}
Haim Horowitz and Shelah Shelah, \emph{A {B}orel maximal cofinitary group},
  arXiv:1610.01344v1 [math.LO], Oct 2016.

\bibitem{hrusak-cofinitary}
Michael Hru{\v{s}}{\'a}k, Juris Steprans, and Yi~Zhang, \emph{Cofinitary
  groups, almost disjoint and dominating families}, J. Symbolic Logic
  \textbf{66} (2001), no.~3, 1259--1276. \MR{1856740 (2002k:03080)}

\bibitem{jech-set}
Thomas Jech, \emph{Set theory}, Springer Monographs in Mathematics,
  Springer-Verlag, Berlin, 2003, The third millennium edition, revised and
  expanded. \MR{1940513 (2004g:03071)}

\bibitem{jensen-fine}
R.~Bj{\"o}rn Jensen, \emph{The fine structure of the constructible hierarchy},
  Ann. Math. Logic \textbf{4} (1972), 229--308; erratum, ibid. 4 (1972), 443,
  With a section by Jack Silver. \MR{0309729 (46 \#8834)}

\bibitem{kanamori}
Akihiro Kanamori, \emph{The higher infinite - large cardinals in set theory
  from their beginnings}, second ed., Springer Monographs in Mathematics,
  Springer-Verlag, Berlin, 2003. \MR{1994835}

\bibitem{kastermans-complexity}
Bart Kastermans, \emph{The complexity of maximal cofinitary groups}, Proc.
  Amer. Math. Soc. \textbf{137} (2009), no.~1, 307--316. \MR{2439455
  (2009k:03086)}

\bibitem{kastermans-isomorphism}
\bysame, \emph{Isomorphism types of maximal cofinitary groups}, Bull. Symbolic
  Logic \textbf{15} (2009), no.~3, 300--319. \MR{2604957 (2011h:03110)}

\bibitem{kastermans-analytic}
Bart Kastermans, Juris Stepr{\=a}ns, and Yi~Zhang, \emph{Analytic and
  coanalytic families of almost disjoint functions}, J. Symbolic Logic
  \textbf{73} (2008), no.~4, 1158--1172. \MR{2467209 (2010e:03057)}

\bibitem{kastermans-cardinal}
Bart Kastermans and Yi~Zhang, \emph{Cardinal invariants related to permutation
  groups}, Ann. Pure Appl. Logic \textbf{143} (2006), no.~1-3, 139--146.
  \MR{2258626 (2007e:03086)}

\bibitem{koppelberg-groups}
Sabine Koppelberg, \emph{Groups of permutations with few fixed points}, Algebra
  Universalis \textbf{17} (1983), no.~1, 50--64. \MR{709997 (85k:20009)}

\bibitem{lyndon-combinatorial}
Roger~C. Lyndon and Paul~E. Schupp, \emph{Combinatorial group theory}, Classics
  in Mathematics, Springer-Verlag, Berlin, 2001, Reprint of the 1977 edition.
  \MR{1812024 (2001i:20064)}

\bibitem{mansfield-recursive}
Richard Mansfield and Galen Weitkamp, \emph{Recursive aspects of descriptive
  set theory}, Oxford Logic Guides, vol.~11, The Clarendon Press, Oxford
  University Press, New York, 1985, With a chapter by Stephen Simpson.
  \MR{786122 (86g:03003)}

\bibitem{mathias-weak}
A.~R.~D. Mathias, \emph{Weak systems of {G}andy, {J}ensen and {D}evlin}, Set
  theory, Trends Math., Birkh\"auser, Basel, 2006, pp.~149--224. \MR{2267149
  (2007g:03066)}

\bibitem{miller-infinite}
Arnold~W. Miller, \emph{Infinite combinatorics and definability}, Ann. Pure
  Appl. Logic \textbf{41} (1989), no.~2, 179--203. \MR{983001 (90b:03070)}

\bibitem{on-shelah}
D.~Schrittesser, \emph{On {H}orowitz and {S}helah's maximal eventually
  different family},
  {\href{http://arxiv.org/abs/arXiv:1703.01806}{\nolinkurl{arXiv:1703.01806
  [math.LO]}}}, March 2017.

\bibitem{truss-embeddings}
J.~K. Truss, \emph{Embeddings of infinite permutation groups}, Proceedings of
  groups---{S}t.\ {A}ndrews 1985, London Math. Soc. Lecture Note Ser., vol.
  121, Cambridge Univ. Press, Cambridge, 1986, pp.~335--351. \MR{896533
  (89d:20002)}

\bibitem{truss-joint}
\bysame, \emph{Joint embeddings of infinite permutation groups}, Advances in
  algebra and model theory ({E}ssen, 1994; {D}resden, 1995), Algebra Logic
  Appl., vol.~9, Gordon and Breach, Amsterdam, 1997, pp.~121--134. \MR{1683516
  (2000i:20005)}

\bibitem{zhang-maximal}
Yi~Zhang, \emph{Maximal cofinitary groups}, Arch. Math. Logic \textbf{39}
  (2000), no.~1, 41--52. \MR{1735183 (2001d:03120)}

\bibitem{zhang-permutation}
\bysame, \emph{Permutation groups and covering properties}, J. London Math.
  Soc. (2) \textbf{63} (2001), no.~1, 1--15. \MR{1801713 (2001j:20004)}

\bibitem{zhang-constructing}
\bysame, \emph{Constructing a maximal cofinitary group}, Lobachevskii J. Math.
  \textbf{12} (2003), 73--81 (electronic). \MR{1974545 (2004e:03088)}

\end{thebibliography}

\end{document}